\documentclass[11pt]{article}
\usepackage{amsmath,amssymb,amsthm}
\setlength{\parskip}{2mm}

\swapnumbers \theoremstyle{plain}
\newtheorem{teor}{Theorem}[section]
\newtheorem{corol}[teor]{Corollary}
\newtheorem{lema}[teor]{Lema}
\newtheorem{propo}[teor]{Proposition}
\theoremstyle{definition}
\newtheorem{defin}[teor]{Definition}
\newtheorem{ejem}[teor]{Example}
\newtheorem{nota}[teor]{Remark}
\theoremstyle{remark}
\newcommand{\pa}{\raisebox{0.8mm}{\mbox{\Large $\wp$}}}

\newcommand{\F}{$\mathcal{F}$}

\setlength{\textwidth}{14cm} \setlength{\textheight}{20cm}
\setlength{\oddsidemargin}{1cm}
\usepackage{fancyhdr}
\pagestyle{fancy}
\title{Construction of topologies, filters and uniformities \\
for a product of sets }
\author{\sc Gustavo N. Rubiano O.\\ \small Mathematics Department \\ \small Universidad Nacional de Colombia, Bogotá}

\begin{document}
\maketitle
\begin{abstract}
Starting from filters over the set of indices, we introduce structures in a product of sets where
the coordinate sets have the given structures.
\end{abstract}
\fancyhead{}
 \fancyhead[CE]{Gustavo N. Rubiano O.}
\fancyhead[CO]{{Construction Of Topologies, Filters, And Uniformities $\ldots$}}

\section{Introduction and historical note}
 H. Tietze [Ti] defined in 1923 a topology for the cartesian product of topological spaces; this topology became
 known in the literature as the ``box topology". The accepted definition for the topology in
 the product of spaces was introduced by A. Tychonoff [Ty] in 1927; although it coincides with the Tietze
 topology for finite products, in general it is different.

It seems that one of the reasons why Tychonoff's definition became
\emph{the definition} (although it goes against common sense to
take only finite open sets in products with index with any
cardinality) is the fact that with this definition the well known
Tychonoff Theorem is satisfied. (Tychonoff gave his definition so
that compactness would be expressively satisfied as a product
property.)  Many other desirable properties are exclusive for the
Tychonoff's topology, such as being the best topology for which
the projection functions are continuous, o have a minimax
property. With such a property Tychonoff's topology for a product
of Hausdorff spaces is maximal for compactness and minimal for
Hausdorff (Proposition 2.10).

We study in this article a definition for topologies over a
Cartesian product of sets introduced by C. Knight in 1964 [Kn], a
definition that generalizes the above mentioned definitions and it
is known as the \F-topology, (the reason for this notation is the
fact that such topology is based on a filter \F over the set of
indexes involved in the indexing of the family of topological
spaces used to produce the product set). Incidentally, this
definition allowed us to introduce a mechanism for similar
constructions on other structures such as filters and
uniformities.

\section{Topologies for a product of topological spaces}
\subsection{Notation and basic concepts}

The following well-known definitions and facts are included so that the notation that we will use
is established.  Given a set $X$, the collection of all the subsets is denoted either as $\pa(X)$
 or as $2^X$ and we call it parts of $X$.  By   $\{X_i\}_{i\in I}$        we mean a family of sets indexed by a set
$I$. Its Cartesian product is by definition the set
$$\prod_{i\in I}X_i  :=  \{ f:  I \longrightarrow
\bigcup_{i\in I} X_i \mid  f( i ) \in X_i \}.$$
 A \emph{topological structure} for a set $X$ is a family ${\cal G}$ of subsets of X closed for arbitrary unions as
well as for finite intersections; The elements of ${\cal G}$ are
called open sets and the pair $(X, {\cal G})$  is by definition a
topological space. We briefly write $X$ when it is not necessary
to specify ${\cal G}$.  Given a topological space $(X, {\cal G})$
we say that    $V \subseteq X$            is a neighborhood of $x
\in X$
---we denote $V_x$---- if and only if there is a $U \in {\cal G}$ such that $ x \in U \subseteq V_x$.
The set of all the neighborhoods of $x$ is denoted $\mathcal V(x)$. We say that $X$ is Hausdorff or
$T_2$ if for every pair of points $ x,y \in X$ there are $V_x , V_y$  with  $ V_x\cap V_y =
\varnothing $.

 A family $ {\cal B} \subseteq \pa(X)$    is \emph{base} of a unique topology for $X $   if and only if it is true
  that
 $X= \bigcup\{B \mid B \in {\cal B}\}.$
and Given any $ U,V \in {\cal B}$   and  $ x \in U\cap V$ there is
$B$ in ${\cal B}$ such that $ x \in B \subseteq{ U \cap V}$. That
is, $ U \cap V$ is a union of elements of ${\cal B}$ for every
pair  $ U,V$ of ${\cal B}$.

Let's construct now the topology ${\cal G}$ for which ${\cal B}$
is a base. We define${\cal G}$ saying that $U\in {\cal G}$ if and
only if $U $ is a union of elements of $ {\cal B}$. This last
topology is known as the topology generated by the base $ {\cal
B}$ and we denote it  ${\cal G} = \langle {\cal B} \rangle$.

 A function $f$ between topological spaces $f: (X,{\mathcal G}) \longrightarrow (Y,{\mathcal H})$
 is said to be continuous if
and only if for each $V \in {\mathcal H}$ we have that  $f^{-1}(V) \in {\mathcal G}$.

\bigskip
A \emph{filter structure} for a set $X$ is a family \F \ of
non-empty subsets of $X$ closed for finite intersections as well
as for supersets; that is, $ {{\cal F}} = \{ F_j \, \mid \, j \in
J \}$  with $\varnothing \neq F_j \subseteq X $  such that
\begin{enumerate}
\item If $F_1,F_2 \in {\mathcal F}$   then  $F_1\cap F_2 \in\mathcal
F$,
\item If $F \in {\mathcal F}$   and $F \subseteq G$ then $G \in {\mathcal
F}$.
\end{enumerate}
 We can
rewrite these two properties as
\begin{enumerate}
\item[3.]  $F_1\cap F_2 \in\mathcal F$ if and only if  $ F_1 \in {\mathcal F}$ y $F_2 \in\mathcal F$.
\end{enumerate}
 A family  $\mathcal B\subseteq 2^X$ is a\emph{ base of filter} for a unique
filter ${\mathcal F}$ of $X$  if and only if it satisfies,
$\varnothing \notin \mathcal B\ne\varnothing $, and  Given
$B_1,B_2 \in\mathcal B$ there is $B_3\in \mathcal B$ such that
$B_3\subseteq B_1\cap B_2$  (this definition is not punctual, as
it is the case in the definition of a base of a topology).
 The generated filter \F \ is defined by $F \in\mathcal F$ if and only if there is$B \in \mathcal B$ such that
 $B\subseteq F$; we denote it  ${\mathcal F} = \langle \mathcal B \rangle$ ---the collection of supersets
 of the elements of B---.

 Given $A\subseteq X$ the family $\mathcal B = \{ A \}$ is a filter
base. The filter ${\mathcal F}_{\langle A \rangle} :=  \langle {A}
\, \rangle$ associated with this base is called the principal
filter associated with $A$.

\medskip

 If in the definition of filter it is allowed that, $\varnothing \in \mathcal{F}$
 we will always obtain the same trivial filter $\pa(X)$ of the parts of $X$; this is the reason why it is
the custom not to admit the empty set in the definition, but to
allow to accept $2^X$ as a filter called \emph{trivial}.  A filter
${\cal G}$ in X is called an ultrafilter if and only if given any
$A\subseteq X$ then  $A\in \mathcal G$ or $A^c \in \mathcal G$.

Let $f: X\longrightarrow Y$ be a function between sets. Given a
filter \F \ in $X$ the collection
$$f({\mathcal F}) := \{f(F)\mid F\in {\mathcal F} \},$$
is a base for a filter in $Y$ denoted  $\langle f({\mathcal F}) \rangle$. If $f$ is onto, then
$f({\mathcal F})$ is automatically a filter, and if ${\mathcal F}$ is an ultrafilter, so is
$f({\mathcal F})$.
\subsection{The construction}

Given a family   $\{(X_i, \mathcal{J}_i)\}_{i \in I}$ of topological spaces, we define a box $\bold
U$ in
 the set product $\prod_{i \in I} X_i$  as
\begin{equation*}
\bold U  := \langle U_i \rangle :=  \prod_{i\in I} U_i, \ U_i \in \mathcal{J}_i,\
 i \in I.
\end{equation*}
The set   $\delta(\bold U)$ of \emph{distinguished indexes} of a
box is defined as
\begin{equation*}
\delta(\bold U)=\delta \langle U_i \rangle := \{i \in I \mid U_i = X_i\}.
\end{equation*}
The \emph{support } $\sigma(\bold U)$ of the box  is defined as
\begin{equation*}
\sigma(\bold U)=\sigma \langle U_i \rangle := \{i \in I \mid U_i \ne X_i\}.
\end{equation*}

\begin{propo}Let $ \mathcal{F}$ be a non-empty collection of subsets of $I$.
The boxes $\langle U_i \rangle$ such that $\delta \langle U_i \rangle \in  \mathcal{F}$ form a base
for a topology in $\prod_{i \in I} X_i$ if and only if \F \ is closed for finite intersections.
\end{propo}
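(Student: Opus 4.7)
The central observation is that the intersection of two boxes is itself a box, and its distinguished set is simply the intersection of the two distinguished sets:
\[
  \langle U_i\rangle \cap \langle V_i\rangle \;=\; \langle U_i\cap V_i\rangle,\qquad \delta\langle U_i\cap V_i\rangle \;=\; \delta\langle U_i\rangle \cap \delta\langle V_i\rangle,
\]
the right-hand identity holding because $U_i\cap V_i = X_i$ (as subsets of $X_i$) is equivalent to $U_i = V_i = X_i$. The whole proposition then reduces to a translation between intersections inside $\F$ and intersections of boxes in the proposed base.

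For the \emph{sufficiency}, assume $\F$ is closed under finite intersections. Covering of $\prod_{i\in I} X_i$ is immediate, because the full product is itself the box $\langle X_i\rangle$ whose $\delta$ equals $I$ (the empty intersection read inside $\F$), hence it lies in the proposed base. For the second base axiom, given two boxes $\langle U_i\rangle,\langle V_i\rangle$ in the family, the identity gives $\delta\langle U_i\cap V_i\rangle\in\F$, so $\langle U_i\cap V_i\rangle$ itself belongs to the family and serves as the required witness $\langle W_i\rangle$ at every point of the intersection.

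For the \emph{necessity}, suppose the family is a base and take $A,B\in\F$. I would realise $A$ and $B$ as distinguished sets of specific boxes $\langle U_i\rangle$ and $\langle V_i\rangle$---for each $i\notin A$ (respectively $i\notin B$) choosing a proper open subset $U_i\subsetneq X_i$ (resp.\ $V_i\subsetneq X_i$) and keeping $U_j=X_j$ on $A$ (resp.\ $V_j=X_j$ on $B$)---and arrange $\langle U_i\cap V_i\rangle$ to be non-empty. Applying the base axiom at a point $x$ of this intersection produces a box $\langle W_i\rangle$ in the family with $\langle W_i\rangle\subseteq\langle U_i\cap V_i\rangle$; by the identity this forces $\delta\langle W_i\rangle\subseteq A\cap B$, while $\delta\langle W_i\rangle\in\F$. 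The main obstacle sits in the last step, promoting this inclusion to the equality $\delta\langle W_i\rangle=A\cap B$: the base axiom a priori only supplies a sub-box whose distinguished set lies inside $A\cap B$, so one must tighten the choice of $\langle U_i\rangle,\langle V_i\rangle$ and exploit the fact that $\langle U_i\cap V_i\rangle$ itself is the box realising $\delta=A\cap B$ exactly, in order to conclude $A\cap B\in\F$.
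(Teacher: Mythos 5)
Your sufficiency direction is essentially the paper's argument and is fine (modulo the covering condition: you get $I\in\mathcal F$ from ``the empty intersection,'' which closure under finite intersections in the usual non-nullary sense does not supply; the paper glosses over covering as well). The genuine gap is in the necessity direction, and it is exactly the step you flag and leave open: from a basic box $\langle W_i\rangle$ with $x\in\langle W_i\rangle\subseteq\langle U_i\cap V_i\rangle$ you obtain only $\delta\langle W_i\rangle\in\mathcal F$ and $\delta\langle W_i\rangle\subseteq A\cap B$, and since $\mathcal F$ is merely a collection (not upward closed), this says nothing about $A\cap B$. Moreover, no ``tightening of the choice of $\langle U_i\rangle,\langle V_i\rangle$'' can close this gap, because under the pointwise definition of base (the one the paper itself states in Section 2.1) the implication is false. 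Take $I=\{1,2,3\}$, each $X_i=\mathbb R$, and $\mathcal F=\{\varnothing,\{1,2\},\{2,3\}\}$: this $\mathcal F$ is not closed under intersections ($\{2\}\notin\mathcal F$), yet the corresponding boxes do form a base. The only problematic pairwise intersections are of the form $W_1\times\mathbb R\times W_3$ with $W_1,W_3$ proper open, and around any point $x$ of such a set the basic box $W_1\times(x_2-1,x_2+1)\times W_3$, whose distinguished set is $\varnothing\in\mathcal F$, does the job.

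What the paper actually does here is assert outright that $\delta\langle A_i\cap B_i\rangle\in\mathcal F$; that is, it silently reads ``the boxes form a base'' in the stronger sense that the intersection of two boxes of the family is again a box \emph{of the family}. Under that reading your own computation finishes the proof in one line: realise $A$ and $B$ exactly as $\delta\langle A_i\rangle$ and $\delta\langle B_i\rangle$ (possible since the $X_i$ are assumed non-trivial), and conclude $A\cap B=\delta\langle A_i\rangle\cap\delta\langle B_i\rangle=\delta\langle A_i\cap B_i\rangle\in\mathcal F$. So the missing ingredient is not a cleverer choice of boxes but the recognition that the proposition requires this stronger reading of ``base'' (or some extra hypothesis) to be true at all; as written, your honest version of the argument cannot be completed.
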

\begin{proof}
 $(\Rightarrow)$ Let $A,B \in \mathcal{F}$  and $\langle A_i \rangle , \langle B_i \rangle $
 be two boxes such that $\delta \langle A_i \rangle= A$, $ \delta \langle B_i \rangle=B$. The box
$\langle A_i \rangle \cap \langle B_i \rangle  =  \langle A_i \cap B_i \rangle$ is such that
 $\delta \langle A_i \cap B_i \rangle \in \mathcal{F}$; but
 $$A\cap B = \delta \langle A_i \rangle \cap \ \delta
 \langle B_i \rangle = \delta \langle A_i \cap B_i \rangle \in \mathcal{F}.$$
$(\Leftarrow)$ Since \F$\ne \varnothing$, there is at least an open set. Let $\langle A_i \rangle ,
\langle B_i \rangle $ be boxes such that $\delta \langle A_i \rangle, \delta \langle B_i \rangle
\in $\F. Then
\begin{itemize}
\item [(i)] $\langle A_i \rangle \cap \langle B_i \rangle  =  \langle A_i \cap B_i \rangle$,
\item [(ii)] $\delta \langle A_i \cap B_i \rangle = \delta \langle A_i \rangle \cap  \delta
 \langle B_i \rangle \in \mathcal{F}$.\qed
\end{itemize}
\renewcommand{\qed}{}
\end{proof}
We denote the topology generated by this base

 \medskip
\centerline{$\prod_{i \in I}^{\mathcal F} X_i$, or simply
$\mathcal{J_{\mathcal{F}}}$,}
\medskip

\noindent and we call it the $\mathcal F$--topology for the
product of the spaces   $X_i$. An element of the base can then be
denoted as
\begin{equation*}
\prod_{i \in F^c}U_i \, \overset{{\mathcal F}}{\times} \, \prod_{i \in F} X_i.
\end{equation*}
\begin{nota} The above result is true even if the $X_i$ are not topological spaces. This is also
true, even if the elements of the boxes are not necessarily open sets; this would obviously mean
that we need to forget about the topological structures in the factor spaces $X_i$,  but also this
has the advantage that we can generalize the way we are going to do it in Sections 2 and 3.
\end{nota}
 The \emph{Tychonoff's topology} has as base the set of boxes    $\langle U_i \rangle$           such that $U_i = X_i$
  for every $i$ in $I$
except for a finite number of indexes $i$; That is,  $\langle U_i \rangle$      is in the base if
and only if
$$
\delta \langle U_i \rangle := \{i \in I \mid U_i = X_i\} \in  \mathcal{F}
$$
where $ \mathcal{F}$ is the Fr$\grave{e}$chet filter in $I$ ---the
filter of the cofinite sets over I; more precisely, $F\in
\mathcal{F}$ if and only if the complement of $F$ in $I$ is
finite-.

Since every filter is obviously a closed family for finite intersections, is clear then that the
Tychonoff's product topology is that one determined by the filter of the cofinite sets (we denote
by   $\bigotimes_{i\in I} \, X_i$  the product space with the Tychonoff's topology). The box
topology is generated from the trivial filter $\pa(I)$ ---parts of $I$---; for this box topology,
an arbitrary product of open sets is open, or in other words, every box is open (we denote
$\square_{i \in I} \, X_i$ the product space with the box topology).

Let's remember that for every non-empty set $X$, the collections
$Top(X)$  of the topologies on $X$ and  $Fil(X)$ of the filters on
$X$, are ordered in a natural way by inclusion among sets.
$Top(X)$ is a complete lattice with the trivial topology with only
two elements   ($\emptyset$ and $X$) as the smallest element, and
the discrete topology or parts of $X$ as the largest element.

Unless we say otherwise, the factor spaces $X_i$ considered in this article will have a non-trivial
topology (factors with trivial topologies do not contribute in any way to \F-topologies).

The following central proposition tells us that the construction
is much more than a bijection function from $Fil(I)$ to the
\F-topologies for the family $\{(X_i, \mathcal{J}_i)_{i \in I}\}$.
\begin{propo}  For a fixed collection  $\{(X_i, \mathcal{J}_i)\}_{i \in I}$ of non-trivial topological spaces, the
function
$$\mathcal{F} \mapsto \prod_{i \in I}^{\mathcal F} X_i $$
is an order immersion:

\medskip
\centerline{$ \mathcal{F}\le \mathcal{G} \Longleftrightarrow \prod_{i \in I}^{\mathcal F} X_i \le
\prod_{i \in I}^{\mathcal G} X_i.$}
\end{propo}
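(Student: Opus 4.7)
The plan is to prove the two implications separately, with the nontrivial content residing in the $(\Leftarrow)$ direction, where the hypothesis that each $X_i$ has a nontrivial topology becomes essential.

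For $(\Rightarrow)$ I would simply observe that if $\mathcal{F}\subseteq \mathcal{G}$ then every basic box of $\mathcal{J}_{\mathcal{F}}$ is already a basic box of $\mathcal{J}_{\mathcal{G}}$: a box $\langle U_i\rangle$ is basic for $\mathcal{J}_{\mathcal{F}}$ precisely when $\delta\langle U_i\rangle\in \mathcal{F}\subseteq \mathcal{G}$, which is the defining condition for being basic for $\mathcal{J}_{\mathcal{G}}$. Since the base of $\mathcal{J}_{\mathcal{F}}$ is contained in the base of $\mathcal{J}_{\mathcal{G}}$, the generated topologies satisfy $\mathcal{J}_{\mathcal{F}}\subseteq \mathcal{J}_{\mathcal{G}}$.

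For $(\Leftarrow)$ the strategy is: given $F\in \mathcal{F}$, manufacture a basic box of $\mathcal{J}_{\mathcal{F}}$ whose distinguished set equals exactly $F$, and then use its openness in $\mathcal{J}_{\mathcal{G}}$ to extract a set in $\mathcal{G}$ contained in $F$. Concretely, for each $i\notin F$ I use the non-triviality of $\mathcal{J}_i$ to pick an open set $U_i$ with $\varnothing\ne U_i\subsetneq X_i$ (invoking choice across $i\notin F$), and set $U_i=X_i$ for $i\in F$. The resulting box $\bold U=\langle U_i\rangle$ satisfies $\delta(\bold U)=F\in \mathcal{F}$, so $\bold U\in \mathcal{J}_{\mathcal{F}}\subseteq \mathcal{J}_{\mathcal{G}}$. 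Since $\bold U$ is nonempty, I can pick a point $x\in \bold U$ and find a basic box $\bold V=\langle V_i\rangle$ of $\mathcal{J}_{\mathcal{G}}$ with $x\in \bold V\subseteq \bold U$; by construction $\delta(\bold V)\in \mathcal{G}$.

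The key observation closing the argument is that $\delta(\bold V)\subseteq F$: for each $i\notin F$, the inclusion $V_i\subseteq U_i\subsetneq X_i$ forces $V_i\ne X_i$, so $i\notin \delta(\bold V)$. Since $\mathcal{G}$ is a filter (closed under supersets) and contains $\delta(\bold V)$, it contains $F$, as required. The main obstacle, and the only place where the hypothesis of nontrivial factors is used, is precisely the construction of the box with prescribed distinguished set $F$; if some $X_i$ carried the indiscrete topology, the $\delta$ of any basic box would be forced to contain $i$, and the immersion would collapse to a mere monotone map rather than an embedding.
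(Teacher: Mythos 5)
Your proof is correct and rests on the same key device as the paper's: a box whose distinguished set is exactly $F$, built by choosing a nonempty proper open set $U_i\subsetneq X_i$ in each coordinate outside $F$ (the only place non-triviality of the factors enters, in both arguments). The difference is purely organizational --- you argue directly that any $\mathcal{G}$-basic box contained in this box has distinguished set inside $F$, so $F\in\mathcal{G}$ by upward closure, whereas the paper runs the contrapositive and shows by contradiction that no $\mathcal{G}$-basic box fits inside when $F\notin\mathcal{G}$; your direct version is, if anything, slightly cleaner.
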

\begin{proof} $(\Rightarrow)$ If  $U \in \prod_{i \in I}^{\mathcal F} X_i$  is a basic open set
 observe that it is also a basic
open set in  $\prod_{i \in I}^{\mathcal G} X_i$ since if $F=\delta \langle U \rangle \in {\mathcal
F}$ then $F\in {\mathcal G}$ .

$(\Leftarrow)$ Suppose $\prod_{i \in I}^{\mathcal F} X_i \le \prod_{i \in I}^{\mathcal G} X_i$ it
must be that  ${\mathcal F}\le {\mathcal G}$. Otherwise, there is  $F\in {\mathcal F}$ with $F
\notin {\mathcal G}$ and therefore  $G\nsubseteq F$ for every   $G \in {\mathcal G}$, or,
equivalently   $F^c\nsubseteq G^c$ for every $G$, that is, for every ${ G}$ there is an index  $i_G
\in I$ with $i_G \in F^c$ and  $i_G\notin G^c$.

For this fixed $F$, the open set
\begin{equation}
\prod_{i \in F^c}U_i \overset{{\mathcal F}}{\times} \prod_{i \in F} X_i
\end{equation}
of the \F-topology, is also ---by hypothesis--- in the ${\mathcal
G}$-topology and therefore it must be a union of elements of the
base in the ${\mathcal G}$-topology. Let's see that this is not
true.

In the ${\mathcal G}$-topology the base is formed from sets of the
form
\begin{equation*}
\prod_{i \in G^c}U_i \overset{{\mathcal G}}{\times} \prod_{i \in G} X_i.
\end{equation*}
For every basic element ---determined by a  $G\in {\mathcal G}$---
\begin{equation*}
\prod_{i \in G^c}U_i \overset{{\mathcal G}}{\times} \prod_{i\in G} X_i
\end{equation*}
there is $i_G$  with $i_G \in F^c$  and  $i_G \in G$; for this
index $i_G$ we take and open set $U_{i_G}\ne X_{i_G}$ and an
element $x_{i_G} \notin U_{i_G} $ to construct a point  $x=
(\ldots,x_{i_G}, \ldots)$ ---the  $x_i$  with $i$ coordinates
different from  ${i_G}$ are arbitrary--- which does not belong to
the open set
\begin{equation*}
\prod_{i\in F^c}U_i \overset{{\mathcal F}}{\times} \prod_{i \in F}X_{i}
\end{equation*}
but it belongs to
\begin{equation*}
\prod_{i \in G^c}U_i \overset{{\mathcal G}}{\times} \prod_{i\in G} X_i.
\end{equation*}
Therefore, no element of the base is in
\begin{equation*}
\prod_{i \in F^c}U_i \overset{{\mathcal F}}{\times} \prod_{i \in G} X_i
\end{equation*}
and so (1) cannot be a union of elements of the base, which implies that it does not belong to the
${\mathcal G}$-topology, and therefore,
 ${\mathcal F}$-topology $\nsubseteq$ ${\mathcal G}$-topology
which contradicts the hypothesis.
\end{proof}
The  following example presents a particular class of
\F-topologies, which, as far as the author knows -search in
MathSci and Zentrakbkatt- are the only \F-topologies referred in
the literature, for example [Bo], [Kn], [Vi].

\begin{ejem}\emph{A special class of product spaces}. Let $I$ be a set with infinite
cardinality. Say $|I|=\mathfrak c$. If   $\mathfrak d$  is another cardinal with  $\mathfrak d \le
\mathfrak c$ then the set $\mathcal{F}=\{F \subseteq I : |X-F|< \mathfrak d \}$ of the complements
of the subsets of $I$  with cardinal less than   $\mathfrak d$ is a filter over $I$, called the
filter of the $\mathfrak d$-complements. The corresponding \F-topology has as basic open sets the
boxes  $\langle U_i \rangle$  such that  $|\sigma \langle U_i \rangle | < \mathfrak d$; when
$\mathfrak d = |\mathbb N|$, the corresponding filter is the one of the cofinite sets, and if
$\mathfrak d=|\mathbb R|$ then we have the filter of the complements of the countable sets.
\end{ejem}
\subsection{Hausdorff} When is the \F-product of Hausdorff spaces Hausdorff?

 Given two
points    $x=(x_i)$, $y=(y_i)$ in $X=\prod_{i\in I} X_i$  with $x\ne y$, there is at least an index
$i$ with $x_i \ne y_i$. For this index we can find separate neighborhoods in the $X_i$, and then
going into the product space we should find proper open sets in $X_i$. If we remember that the
basic open sets are of the form
\begin{equation*}
\prod_{i \in F^c}U_i \, \overset{{\mathcal F}}{\times} \, \prod_{i \in F} X_i \qquad (F\in
{\mathcal F}),
\end{equation*}
\noindent then to be able to separate them with disjoint
neighborhoods it must happen that  $i\notin F$ for some  $F \in
{\mathcal F}$.
\begin{propo}  Let  $\mathcal{F}$ be a filter in $I$. The following are equivalent propositions:
\begin{enumerate}
\item For each $i\in I$ there is $F\in \mathcal{F}$ with  $i\notin F$.
\item  For each  $i\in I$ the set   $I-\{i\}\in \mathcal{F}$.
\end{enumerate}
\end{propo}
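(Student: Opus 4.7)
The plan is to prove the two directions separately, exploiting only the fact that $\mathcal{F}$ is closed under supersets together with the trivial observation that $i \notin F$ is equivalent to $F \subseteq I - \{i\}$.

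First I would dispatch the easy direction $(2) \Rightarrow (1)$: given $i \in I$, simply take $F := I - \{i\}$, which lies in $\mathcal{F}$ by hypothesis and visibly satisfies $i \notin F$.

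For the converse $(1) \Rightarrow (2)$, fix $i \in I$ and invoke hypothesis (1) to produce some $F \in \mathcal{F}$ with $i \notin F$. The condition $i \notin F$ is exactly the statement $F \subseteq I - \{i\}$. Because $\mathcal{F}$ is a filter, it is closed under the formation of supersets (property 2 of the filter definition), so from $F \in \mathcal{F}$ and $F \subseteq I - \{i\}$ we conclude $I - \{i\} \in \mathcal{F}$, as required.

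There is no real obstacle here; the whole content is the observation that $\{i\}^{c}$ is the largest subset of $I$ that omits $i$, so any filter element omitting $i$ forces $\{i\}^{c}$ into $\mathcal{F}$ by upward closure. I would present the proof in a few lines without further embellishment, since no use of the finite-intersection property or of any topological structure is needed.
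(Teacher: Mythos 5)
Your proof is correct and coincides with the paper's: the direction $(1)\Rightarrow(2)$ is handled in both by noting $i\notin F$ means $F\subseteq I-\{i\}$ and invoking upward closure, and $(2)\Rightarrow(1)$ (which the paper dismisses as immediate) you rightly settle by taking $F=I-\{i\}$ itself. No differences worth noting.
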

\begin{proof}($\Rightarrow$) Given $i$ there is $F$    with  $i \notin F$, then $F \subseteq I-\{i\}$ and
so  $I-\{i\} \in \mathcal{F}$.

($\Leftarrow$)It is immediate.
\end{proof}
\begin{defin} A filter with the properties in the previous proposition is called
\emph{saturated}.
\end{defin}
\begin{itemize}
\item If a filter \F \ is saturated then the filter of the cofinite sets is contained in \F, since by
taking finite intersections in  \F \   we can produce any element of cofinite sets.
\item There are
saturated filters different from the filter of the cofinite sets, for example the filter of the
conumerable sets formed by all the subsets of I that have numerable complements finite of infinite
(see Example 2.4).
\item Every \emph{free ultrafilter} (ultrafilters not generated by an element) is a
saturated filter, since otherwise there would be an index with  $i\in I$ con $I-\{i\} \notin
{\mathcal F}$ and this would imply that $\{i\} \in {\mathcal F}$ and ${\mathcal F}$ wouldn't be
free.
\end{itemize}
\noindent What conditions on the filter \F \  assure us that the projection functions
$$p_i:\prod_{i\in I}^{\mathcal{F}} X_i \longrightarrow X_i$$
are continuous?
\begin{propo} Let  $\{(X_i, \mathcal{J}_i)\}_{i \in I}$ be a collection of non-trivial topological spaces and \F \  a filter
in $I$. The \F--topologies make the projection functions continuous if and only if the filter \F is
finer than the filter of the cofinite sets.
\end{propo}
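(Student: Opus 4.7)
The plan is to translate continuity of each projection into a concrete membership statement about $\mathcal{F}$, and then invoke the chain of equivalences already recorded in Proposition~2.5 and the bullets after Definition~2.6.

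First I would observe that for a proper open set $U\subsetneq X_i$, the preimage
\[
p_i^{-1}(U) \;=\; U \,\times\, \prod_{j\ne i} X_j
\]
is itself a box whose set of distinguished indices is exactly $I\setminus\{i\}$. So continuity of $p_i$ is essentially asking that this very box be open in the $\mathcal{F}$-topology.

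For the easy direction $(\Leftarrow)$, if $\mathcal{F}$ contains the cofinite filter then $I\setminus\{i\}$, being cofinite, lies in $\mathcal{F}$; hence the box $p_i^{-1}(U)$ is a basic open set of $\prod_{j\in I}^{\mathcal{F}} X_j$ for every open $U\subseteq X_i$, and each $p_i$ is continuous. (The case $U=X_i$ is trivial.)

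For the harder direction $(\Rightarrow)$, I would fix $i\in I$, use the hypothesis that $X_i$ carries a non-trivial topology to choose a proper open $U\subsetneq X_i$, and then unpack what it means for $p_i^{-1}(U)$ to be open. Continuity gives that this set is a union of basic opens $\prod_{j\in F^c}V_j\times\prod_{j\in F}X_j$ with $F\in\mathcal{F}$. Picking a point $x$ in $p_i^{-1}(U)$ and one such basic open containing $x$ and lying inside $p_i^{-1}(U)$, the containment forces the $i$-coordinate factor of the basic open to be a subset of $U$; but if $i$ belonged to $F$, that factor would be all of $X_i$, contradicting $U\subsetneq X_i$. Hence $i\notin F$, so $F\subseteq I\setminus\{i\}$, and by the superset property of filters $I\setminus\{i\}\in\mathcal{F}$. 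Since $i$ was arbitrary, $\mathcal{F}$ is saturated in the sense of Definition~2.6, and by the first bullet following that definition, $\mathcal{F}$ then contains the cofinite filter.

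The main obstacle, such as it is, is the $(\Rightarrow)$ direction: one must resist the temptation to argue that $p_i^{-1}(U)$ \emph{is} a basic open set in $\mathcal{F}$-topology, since a priori it is only a union of basic opens. The correct move is to extract a single basic open containing a chosen point and use properness of $U$ to rule out $i\in F$, then appeal to the superset axiom to conclude $I\setminus\{i\}\in\mathcal{F}$. Everything else is bookkeeping and a direct appeal to previously established facts.
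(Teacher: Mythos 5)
Your proof is correct, but it takes a genuinely different route from the paper's. The paper handles both directions abstractly: for $(\Leftarrow)$ it applies the order-immersion result (Proposition 2.3) to get cofinite--topology $\subseteq$ $\mathcal{F}$--topology and notes that continuity persists under refinement of the domain topology; for $(\Rightarrow)$ it invokes the recalled characterization of the Tychonoff topology as the coarsest topology making all projections continuous, concluding cofinite--topology $\subseteq$ $\mathcal{F}$--topology, and then applies the hard direction of Proposition 2.3 to descend from a containment of topologies to a containment of filters. You instead argue directly at the level of boxes: you identify $p_i^{-1}(U)$ as a box with distinguished set $I\setminus\{i\}$, which makes $(\Leftarrow)$ immediate once $I\setminus\{i\}$ is cofinite, and for $(\Rightarrow)$ you extract a single basic open around a point of $p_i^{-1}(U)$, use properness of $U$ to force $i\notin F$, and conclude saturation, then cite the bullet that saturated filters contain the cofinite filter. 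Your version is more elementary and self-contained --- it does not lean on the unproved ``recall'' about the minimality of the Tychonoff topology, nor on the nontrivial $(\Leftarrow)$ direction of Proposition 2.3 --- at the cost of some hands-on bookkeeping; the paper's version is shorter on the page but front-loads the work into cited results. The only point worth making explicit in your write-up is that non-triviality of $\mathcal{J}_i$ gives you a proper open set $U$ that is also \emph{non-empty}, which you need in order to pick the point $x\in p_i^{-1}(U)$.
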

\begin{proof}
$(\Rightarrow)$ Let's remember that the Tychonoff topology is characterized as the \emph{best}
topology over the product set that make the projection functions continuous ---best means the
topology of the least open sets, or the coarsest, or the least fine---. If the projection functions
are continuos then
$$cofinite sets\text{--topology} \, \subseteq
 \mathcal{F}\, \text{--topology}$$
and by Proposition 2.3 we have  $cofinite \ sets \subseteq
\mathcal{F}$.

\noindent $(\Leftarrow)$  According to Proposition 2.3 if
$cofinite \ sets \subseteq \mathcal{F}$ then
$$cofinite \ sets \text{--topology} \, \subseteq
 \mathcal{F}\, \text{--topology}$$
and therefore for the \F--topology the projection functions are also continuous.
\end{proof}
\begin{propo} Let  $\mathcal{F}$ be a saturated filter of $I$. The $\mathcal{F}$-topology is Hausdorff if and
only if each $X_i$  is Hausdorff.
\end{propo}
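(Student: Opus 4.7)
My plan is to prove the two implications separately. The reverse direction is where the saturation hypothesis is essential, while the forward direction only uses that $\mathcal{F}$ is a filter (so that $I \in \mathcal{F}$ and basic opens containing any given point exist, since the whole product is itself a basic open).

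For $(\Leftarrow)$, suppose each $X_i$ is Hausdorff and take distinct $x,y \in \prod_{i \in I} X_i$. I would fix an index $i_0$ at which $x_{i_0} \ne y_{i_0}$, apply the Hausdorff property in $X_{i_0}$ to obtain disjoint opens $U, V \subseteq X_{i_0}$ with $x_{i_0} \in U$ and $y_{i_0} \in V$, and lift these to basic boxes in the product by placing $X_i$ in every coordinate $i \ne i_0$. The resulting boxes have distinguished index set equal to $I - \{i_0\}$, which belongs to $\mathcal{F}$ because $\mathcal{F}$ is saturated; hence they are basic opens of the $\mathcal{F}$-topology. Their intersection equals the box that has $U \cap V = \varnothing$ at coordinate $i_0$, so it is empty, separating $x$ and $y$.

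For $(\Rightarrow)$, assume the $\mathcal{F}$-topology is Hausdorff, fix $i_0 \in I$, and take distinct $a, b \in X_{i_0}$. Using the axiom of choice I pick a base point $z_i \in X_i$ for every $i \ne i_0$ and form the product points $x, y$ that agree with this choice outside $i_0$ and take the values $a, b$ respectively at $i_0$. Since $x \ne y$, Hausdorffness in the product yields disjoint basic open boxes $\langle U_i \rangle \ni x$ and $\langle V_i \rangle \ni y$. For every $i \ne i_0$, both $x_i$ and $y_i$ equal $z_i$, so $z_i \in U_i \cap V_i$ and this intersection is non-empty; consequently the only way the total intersection $\prod_i (U_i \cap V_i)$ can be empty is that $U_{i_0} \cap V_{i_0} = \varnothing$, producing the required Hausdorff separators of $a$ and $b$ in $X_{i_0}$.

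The main obstacle, if any, lies in identifying why the saturation hypothesis is the natural one and how it is used: it is precisely the property that $I - \{i\} \in \mathcal{F}$ for every $i$, which is what permits the $(\Leftarrow)$ construction of basic opens differing from the whole product in a single coordinate. Without saturation, some coordinate $i_0$ would admit no such ``thin'' basic box and the separation would fail, consistent with the role saturation already played in Proposition 2.6.
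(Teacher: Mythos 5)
Your proof is correct. The $(\Leftarrow)$ direction is exactly the argument the paper has in mind (it only gestures at it, saying this implication ``is precisely the reason for introducing the concept of saturated''): separate at a coordinate where the points differ and lift via boxes whose distinguished set is $I-\{i_0\}$, which saturation places in $\mathcal{F}$. For $(\Rightarrow)$, however, you take a genuinely different route. The paper embeds each factor $X_i$ as a slice $X_i^y=\prod_{j\neq i}\{y_j\}\times X_i$ and argues that the restricted projection $p_i|_{X_i^y}$ is a homeomorphism (a step that itself uses saturation, since otherwise the subspace topology on the slice is coarser than $\mathcal{J}_i$ and Hausdorffness would not transfer in the needed direction), then invokes heredity and topological invariance of Hausdorff. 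You instead argue directly on two points differing in a single coordinate: disjointness of the separating boxes forces $U_{i_0}\cap V_{i_0}=\varnothing$ because every other coordinate of the intersection contains the chosen base point $z_i$. Your version is more elementary, avoids the homeomorphism verification entirely, and makes visible that saturation is not used in this implication (indeed your argument shows that when the factors are non-trivial, Hausdorffness of the product \emph{forces} $\mathcal{F}$ to be saturated, since $U_{i_0}\neq X_{i_0}$ gives $\delta\langle U_i\rangle\subseteq I-\{i_0\}$ and hence $I-\{i_0\}\in\mathcal{F}$); the paper's version, by contrast, generalizes at once to any hereditary topologically invariant property. Both arguments tacitly assume each $X_i$ is non-empty so that the auxiliary point can be chosen, which is the same implicit hypothesis the paper makes when it picks an arbitrary $y\in X$.
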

\begin{proof}($\Rightarrow$) We will show that each factor $X_i$   is homeomorphic to a
subspace   $X_i^y$ of  $X$, and since Hausdorff is a hereditary
and invariant property we have that $X_i$ is also Hausdorff. To
construct $X_i^y$ we take an arbitrary point \emph{y} of X and the
corresponding I; we define
\[X_i^y:=\{(x_j)\mid x_j=y_j \text{ para } j\neq i\}=\prod_{j\neq i}\{y_j\}\times X_i.\]
The restriction of the projection function
\[\left. p_i \right|_{X_i^y} :X_i^y \longrightarrow X_i\]
is a homeomorphism ---remember that \F  is saturated. \noindent ($\Leftarrow$) This implication is
precisely the reason for introducing the concept of saturated.
\end{proof}

The last proposition implies that, in general, the product of Hausdorff spaces is not Hausdorff; a
necessary condition is that the filter is saturated.

\sffamily
\begin{itemize}
\item [$\bullet$]
 Using techniques of non-standard analysis, Y. Suemura, Y. Nakano in [SN] pretends to proof that
given a filter $\mathcal F$, the product space  $\prod_{i \in
I}^{\mathcal F} X_i$ is Hausdorff if and only if each space is
Hausdorff. The ``proof" of this last result is incorrect for the
simple reason that the proposition is \emph{false}! as we have
establish it in our previous proposition and the following
counterexample.
\end{itemize}
\begin{ejem} \textsf{Let $I=\mathbb N$ and \F=$\langle 1 \rangle$ the (non-saturated) ultrafilter generated by
the element $1\in \mathbb N$. For each $i$ we define $X_i := (\{0,1\}, discrete)$ with which each
factor turns out to be Hausdorff. However, the product space $X=\prod_{i \in I}^{\mathcal F} X_i$
is not Hausdorff, since the points $(1,0,0,0, \ldots)$ and  $(0,0,0,0, \ldots)$ cannot be separated
in the \F-topology considering that each basic open set in the index 1 takes the whole space
$X_1$.}
\end{ejem}

\rmfamily

 \subsubsection{Hausdorff and compactness}
A topology $(X,\mathcal{J})$ that is Hausdorff and compact is characterized by a \emph{mini-max}
property [He]; exactly:
\begin{propo}\label{minimax} If  $(X,\mathcal{J})$ is Hausdorff and compact then  $\mathcal{J}$ is:

 \begin{itemize}
\item \emph{Hausdorff minimal }: $ \mathcal{H}\varsubsetneq \mathcal{J}$   implies that
 $\mathcal{H}$ is not Hausdorff.
 \item \emph{compact maximal }: $ \mathcal{J}\varsubsetneq \mathcal{H}$   implies that $\mathcal{H}$  is not
compact.
\end{itemize}
\end{propo}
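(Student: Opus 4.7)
The plan is to derive both ``mini-max'' properties from a single classical fact: a continuous bijection from a compact space onto a Hausdorff space is a homeomorphism. Once that lemma is available, both statements follow by comparing $\mathcal{J}$ with a candidate topology via the identity map on $X$.

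First I would prove the lemma. Let $f:(X,\mathcal{A})\to(Y,\mathcal{B})$ be a continuous bijection with $(X,\mathcal{A})$ compact and $(Y,\mathcal{B})$ Hausdorff. To show $f$ is a homeomorphism it suffices to show that $f$ is closed. If $C$ is closed in $X$, then $C$ is compact (closed subset of a compact space), so $f(C)$ is compact in $Y$ (continuous image of a compact set), and hence closed in $Y$ (compact subsets of Hausdorff spaces are closed). Therefore $f^{-1}$ is continuous and $f$ is a homeomorphism.

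For the compact-maximal part, I would argue by contradiction: suppose $\mathcal{J}\varsubsetneq\mathcal{H}$ and $(X,\mathcal{H})$ is compact. Consider the identity map $\mathrm{id}:(X,\mathcal{H})\to(X,\mathcal{J})$. Since $\mathcal{J}\subseteq\mathcal{H}$, this map is continuous; it is clearly a bijection; its domain is compact and its codomain is Hausdorff by hypothesis. By the lemma $\mathrm{id}$ is a homeomorphism, forcing $\mathcal{H}=\mathcal{J}$, contradicting the strict inclusion. For the Hausdorff-minimal part I would run the symmetric argument: suppose $\mathcal{H}\varsubsetneq\mathcal{J}$ with $(X,\mathcal{H})$ Hausdorff; then $\mathrm{id}:(X,\mathcal{J})\to(X,\mathcal{H})$ is a continuous bijection from a compact space to a Hausdorff space, hence a homeomorphism by the lemma, again forcing equality and a contradiction.

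The main potential obstacle is essentially bookkeeping rather than conceptual: one must be careful to orient the identity map in the correct direction in each of the two cases so that ``finer $\to$ coarser'' yields continuity, and to invoke compactness on the domain and Hausdorffness on the codomain, so that the lemma applies. Once the directions are set up correctly, the argument is routine and no further machinery is needed beyond the three basic facts about compact and Hausdorff spaces used in the proof of the lemma.
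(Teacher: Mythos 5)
Your proposal is correct and follows essentially the same route as the paper: the paper likewise recalls that a continuous bijection from a compact space onto a Hausdorff space is a homeomorphism and applies it to the identity map in both directions. You merely supply the standard proof of that lemma and spell out the orientation of the identity maps, which the paper leaves implicit.
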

\begin{proof} Let's remember that: if  $(X,\cal G)$, $(Y,\cal H)$ are topological spaces with $X$ compact and $Y$ a
Hausdorff space, then a continuous bijection   $f: X
\longrightarrow  Y$ is a homeomorphism. Therefore, considering
---in both directions---  the identity function  $id_X:
(X,\mathcal{J}) \longrightarrow  (X,\mathcal{H})$  we necessarily
have   $ \mathcal{J}= \mathcal{H}$.
\end{proof}

\begin{corol} If   $(X,\mathcal{J})$ is Hausdorff and compact then for every space $(X,\mathcal{H})$  Hausdorff and
compact that is compared with $(X,\mathcal{J})$ we have  $\mathcal{H} = \mathcal{J}$.
\end{corol}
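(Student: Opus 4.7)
The plan is to note that the word \emph{compared} means that $\mathcal{H}$ and $\mathcal{J}$ are related by the inclusion order on $Top(X)$, i.e.\ either $\mathcal{H}\subseteq\mathcal{J}$ or $\mathcal{J}\subseteq\mathcal{H}$. In each of the two cases the conclusion follows immediately by invoking one of the two halves of the mini--max Proposition~\ref{minimax} applied to $(X,\mathcal{J})$.

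First I would handle the case $\mathcal{H}\subseteq\mathcal{J}$. Here $\mathcal{H}$ is Hausdorff by assumption, so the Hausdorff-minimality of $\mathcal{J}$ forbids a strict inclusion $\mathcal{H}\varsubsetneq\mathcal{J}$; hence $\mathcal{H}=\mathcal{J}$. Symmetrically, in the case $\mathcal{J}\subseteq\mathcal{H}$, the compactness of $\mathcal{H}$ together with the compact-maximality of $\mathcal{J}$ rules out $\mathcal{J}\varsubsetneq\mathcal{H}$, so again $\mathcal{H}=\mathcal{J}$.

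There is essentially no obstacle: the corollary is just a restatement of Proposition~\ref{minimax} in the contrapositive form, once one unpacks what ``comparable'' means in the lattice $Top(X)$. Alternatively, one can give a one-line argument using the observation already recorded inside the proof of Proposition~\ref{minimax}: apply the ``continuous bijection from compact to Hausdorff is a homeomorphism'' principle to both $id_X\colon(X,\mathcal{J})\to(X,\mathcal{H})$ and $id_X\colon(X,\mathcal{H})\to(X,\mathcal{J})$, choosing whichever direction is continuous according to the comparison, and conclude $\mathcal{J}=\mathcal{H}$ in a single stroke. I would present the case-split version, since it makes the use of the previously proved proposition explicit.
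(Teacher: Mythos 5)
Your proof is correct and follows exactly the route the paper intends: the corollary is stated without proof as an immediate consequence of the mini--max proposition, and your case split on $\mathcal{H}\subseteq\mathcal{J}$ versus $\mathcal{J}\subseteq\mathcal{H}$ (invoking Hausdorff-minimality and compact-maximality respectively) simply makes that deduction explicit. Your alternative one-line argument via the identity map is likewise the very argument used in the paper's proof of the proposition itself, so nothing is missing.
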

 From this, if a topology for the product of Hausdorff and compact spaces is strictly contained in
the Tychonoff topology ---which makes it compact--- for this
topology the product of Hausdorff spaces wouldn't be Hausdorff. If
the topology contains the Tychonoff topology, the product of
compact spaces is not necessarily compact (so there is no
Tychonoff theorem for such topology). Because of this, the
Tychonoff topology can be considered as \emph{mini--max} in the
category of bicompact spaces ---Hausdorff and compact spaces---
and in particular    minimal Hausdorff or Hausdorff minimal.  In
1940, Katetov [Ka] characterized the Hausdorff spaces that are
minimal Hausdorff and he produced an example of a minimal
Hausdorff space that is not compact.

\begin{propo} Let \F \  be a saturated filter ---so that we can compare. If
$\prod_{i \in I}^{\mathcal F} X_i$ is Hausdorff and compact then
\F  \ is cofinite (the filter of cofinite sets).
\end{propo}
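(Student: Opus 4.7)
The plan is to string together the tools already established: the order immersion of Proposition 2.2, the saturation/cofinite relationship, Proposition 2.6 on continuity of projections, Proposition 2.7 on the Hausdorff character of factors, and the mini-max property of bicompact topologies in Corollary 2.11 (together with Tychonoff's theorem).

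First I would observe that since $\mathcal{F}$ is saturated, the bullet after Definition 2.4 gives $\text{cofinite}\subseteq\mathcal{F}$, and hence by Proposition 2.2 the Tychonoff topology $\bigotimes_{i\in I} X_i$ is coarser than (or equal to) $\prod_{i\in I}^{\mathcal{F}} X_i$. Proposition 2.6 then says the projections $p_i$ are continuous from the $\mathcal{F}$-product to $X_i$; since the $\mathcal{F}$-product is compact by hypothesis and $p_i$ is onto, each factor $X_i$ is compact. Similarly, since the $\mathcal{F}$-product is Hausdorff and $\mathcal{F}$ is saturated, Proposition 2.7 guarantees that each $X_i$ is Hausdorff.

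Next I would invoke the classical Tychonoff theorem to conclude that $\bigotimes_{i\in I} X_i$ is compact, and its Hausdorff character follows from the factors being Hausdorff. So we have two topologies on the same set, the Tychonoff topology and the $\mathcal{F}$-topology, both Hausdorff and compact, and the first is contained in the second. By Corollary 2.11 (the mini-max property), a Hausdorff compact topology cannot be strictly coarser than another Hausdorff compact topology on the same set, so the two topologies must coincide:
\[
\bigotimes_{i\in I} X_i \;=\; \prod_{i\in I}^{\mathcal{F}} X_i.
\]
Finally, applying the order immersion of Proposition 2.2 in reverse (it is injective because an order immersion reflects equality), this forces $\mathcal{F}$ to equal the filter of cofinite sets.

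The one delicate step is verifying that each $X_i$ is compact; this is where Proposition 2.6 is essential, because without $\mathcal{F}\supseteq\text{cofinite}$ (given by saturation) the projections need not be continuous, and we would have no way to push compactness from the product down to the factors in order to apply the standard Tychonoff theorem. Everything else is a direct bookkeeping use of the propositions already proved.
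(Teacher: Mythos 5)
Your argument is correct and follows essentially the same route as the paper's own proof: use saturation to get the cofinite filter inside $\mathcal{F}$ (hence the Tychonoff topology inside the $\mathcal{F}$-topology), push Hausdorffness and compactness down to the factors via the continuous onto projections, apply Tychonoff's theorem to make the Tychonoff product bicompact, and then invoke the mini-max rigidity of comparable bicompact topologies together with the order immersion to conclude $\mathcal{F}$ is the cofinite filter. Your version is simply more explicit about which earlier propositions are being used at each step.
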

\begin{proof} Since \F \ is saturated
we have
$$\prod_{i \in I}^{\mathcal Cofinites} X_i \subseteq \prod_{i \in
I}^{\mathcal F} X_i.$$
 On the other hand, if  $\prod_{i \in I}^{\mathcal F} X_i$ is Hausdorff and compact, then each $X_i$  is Hausdorff
and each $X_i$  is compact since the projection functions are continuous and onto functions;
therefore,   $\prod_{i \in I}^{\mathcal Cofinitos} X_i$ is also Hausdorff and compact. This last
fact implies
  $$ \prod_{i \in I}^{\mathcal Cofinites} X_i =  \prod_{i \in I}^{\mathcal F} X_i$$
and therefore $\mathcal{F}$ is cofinite \ sets.
\end{proof}
\begin{corol} Let $\mathcal{U}$ be a ultrafilter free on $I$. If every
factor is Hausdorff then $\prod_{i \in I}^{\mathcal U} X_i$ is not a compact space even if each
factor space is.
\end{corol}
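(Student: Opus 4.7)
My plan is to derive the corollary directly from Proposition 2.10 (the previous proposition) by a contradiction argument, using the fact already noted in the earlier bullet list that every free ultrafilter is saturated.

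First I would record the preliminaries. Since $\mathcal{U}$ is a free ultrafilter on $I$, the set $I$ must be infinite, and by the third bullet following Definition 2.5, $\mathcal{U}$ is saturated; hence the cofinite filter is contained in $\mathcal{U}$. Because each $X_i$ is assumed Hausdorff and $\mathcal{U}$ is saturated, Proposition 2.7 gives that $\prod_{i\in I}^{\mathcal{U}} X_i$ is Hausdorff. This takes care of the half of the hypothesis of Proposition 2.10 that does not depend on compactness.

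Next I would argue by contradiction: suppose that $\prod_{i\in I}^{\mathcal{U}} X_i$ is compact. Then the product is simultaneously Hausdorff and compact, and since $\mathcal{U}$ is a saturated filter, Proposition 2.10 applies and forces $\mathcal{U}$ to coincide with the filter of cofinite subsets of $I$. I would then produce a contradiction from this by exhibiting a subset of $I$ belonging to $\mathcal{U}$ but not to the cofinite filter. Concretely, because $I$ is infinite I can split it into two disjoint infinite pieces $A$ and $A^c$; since $\mathcal{U}$ is an ultrafilter, one of $A, A^c$ lies in $\mathcal{U}$, and neither is cofinite. This contradicts the equality $\mathcal{U}=$ cofinite filter and completes the proof.

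The hypothesis that each $X_i$ is compact is in fact not used in the argument, which fits the statement of the corollary (it says ``even if each factor space is''). The only slightly delicate point is checking that the bridge from Proposition 2.10 actually applies --- i.e.\ that $\mathcal{U}$ is saturated --- but this is already recorded in the bulleted remarks, so there is no genuine obstacle; the result reduces to a short citation-plus-ultrafilter argument.
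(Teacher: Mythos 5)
Your argument is correct and follows the paper's own route: the paper's one-line proof (``free ultrafilters contain the filter of the cofinite sets'') is precisely the observation that a free ultrafilter is saturated and strictly finer than the cofinite filter, so compactness together with the Hausdorff property (from the proposition on saturated filters and Hausdorff products) would force $\mathcal{U}$ to equal the cofinite filter by the immediately preceding proposition, a contradiction. You have merely spelled out the details the paper leaves implicit, including the explicit witness that the containment is strict; note only that your numbering of the cited propositions is shifted from the paper's, though the results you invoke are identified unambiguously by content.
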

\begin{proof} The free ultrafilters contain the filter of the cofinite \ sets.
\end{proof}
\begin{nota}  The \emph{generalized Tychonoff theorem} (compactness in the product for
topologies different from the Tychonoff topology) is not valid in
the category of the Hausdorff spaces when the \F -topology comes
from a free ultrafilter. Even more, it only makes sense over
filters non comparable with the filter of the cofinite \ sets;
this, radically excludes the saturated filters different from the
cofinite \ sets.

And since each filter accepts to be contained in a free ultrafilter:
\begin{itemize}
\item \emph{The generalized Tychonoff theorem} is not valid for an ample range of \F-topologies.
\item Let \F \ be a saturated filter. The generalized Tychonoff theorem ---for the \F-topologies--- in
the Hausdorff category is true if and only if \F \ is the filter
of the cofinite \ sets.
\end{itemize}
\end{nota}

With respect to other separation properties weaker than Hausdorff we have the following.
\begin{lema} If  $\{X_i\}_{i\in I}$ is a family of spaces $T_1$ with $|X_i|\ge 2$
and $|I|\ge\aleph_0=|\mathbb N|$ then  $X=\square_{i \in I} \, X_i$ (the box topology) is not
compact.
\end{lema}
\begin{proof} Let' observe that $X$ contains a closed subspace, discrete and infinite, which contradicts that
$X$ be compact since every closed subset of a compact space is compact as subspace.  For each $i\in
I$ let $D_i:=\{x_i^0,x_i^1\}\subseteq X_i$ with the discrete topology. The space
$$D=X=\square_{i \in I} \, D_i,$$
as subset of $X$ is closed, since it is product of closed factors; it is also discrete, since each
point  $x=(x_i)_{i\in I} \in\square_{i\in I} \, D_i$ is open, considering that in each coordinate
$i$ of $x$ we take the other point in $D_i$  and obtain
$$x= (x_i) = D\cap (\prod_{i}\{x_i^0\}^c )$$
where the second set of the intersection is an open set. $D$, of course, is infinite.
\end{proof}
\begin{propo} Let $\{X_i\}_{i\in I}$  be a family of spaces $T_1$  with $|X_i|\ge2$ and  $|I|\ge\aleph_0=|\mathbb N|$.
If \F \ is a saturated filter different from the filter of the
cofinite \ sets then the \F-topology is not compact.
\end{propo}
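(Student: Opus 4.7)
The plan is to exhibit a closed infinite subspace of $X=\prod_{i\in I}^{\mathcal F} X_i$ that carries the box topology on infinitely many $T_1$ factors, and then invoke Lemma 2.13 together with the fact that a closed subspace of a compact space is compact.

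First I would exploit the hypothesis ``$\mathcal F$ saturated and different from cofinite sets''. By the first bullet after Definition 2.5, saturated implies $\mathcal F$ contains the filter of cofinite sets; strict difference then forces the existence of some $F\in\mathcal F$ whose complement $F^c$ is infinite, for otherwise every member of $\mathcal F$ would be cofinite and we would have equality. Fix such an $F$, and fix an arbitrary point $y=(y_i)\in X$. Define the ``fiber''
\[
Y:=\{x\in X\mid x_i=y_i\text{ for every }i\in F\},
\]
which as a set is naturally identified with $\prod_{i\in F^c} X_i$.

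The next step is to show that $Y$ is \emph{closed} in the $\mathcal F$-topology; here the $T_1$ assumption on each $X_i$ and the saturation of $\mathcal F$ both play a role. If $x\notin Y$, pick an index $i_0\in F$ with $x_{i_0}\neq y_{i_0}$; because $\mathcal F$ is saturated, $I\setminus\{i_0\}\in\mathcal F$, so the basic open set
\[
(X_{i_0}\setminus\{y_{i_0}\})\times\prod_{i\neq i_0}X_i
\]
(using $T_1$ to ensure $X_{i_0}\setminus\{y_{i_0}\}$ is open) is an $\mathcal F$-open neighborhood of $x$ disjoint from $Y$.

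Next I would check that the subspace topology on $Y$ coincides with the box topology on $\prod_{i\in F^c}X_i$. Given any $\square$-basic open set $\prod_{i\in F^c}U_i$, the $\mathcal F$-open box $\prod_{i\in F^c}U_i\times\prod_{i\in F}X_i$ (whose distinguished set is $F\in\mathcal F$) cuts $Y$ exactly in the desired set. Conversely, intersecting any $\mathcal F$-basic open $\prod_{i\in\delta^c}U_i\times\prod_{i\in\delta}X_i$ with $Y$ gives either $\varnothing$ or a product of open sets indexed by $F^c\cap\delta^c$, which is a box-open set. Since $|F^c|\ge\aleph_0$ and each factor is $T_1$ with at least two points, Lemma 2.13 yields that $Y$ (with the box topology) is not compact.

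The conclusion is immediate: $Y$ is a closed subspace of $X$ that is not compact, so $X$ itself cannot be compact. I expect the one step requiring care to be the verification that the subspace topology on $Y$ really equals the box topology and not merely a coarser one; the key point is that $F\in\mathcal F$ itself can serve as the distinguished set, which lets arbitrary proper open sets appear in every coordinate of $F^c$ simultaneously.
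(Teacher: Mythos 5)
Your proposal is correct and follows essentially the same route as the paper: pick $F\in\mathcal F$ with $F^c$ infinite, form the closed fiber $\prod_{i\in F}\{y_i\}\times\prod_{i\in F^c}X_i$, identify its subspace topology with the box topology on $\prod_{i\in F^c}X_i$, and apply the lemma on non-compactness of infinite box products of $T_1$ spaces. You are in fact somewhat more careful than the paper, which leaves implicit both the existence of such an $F$ and the closedness argument via continuity of the projections.
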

\begin{proof} Let  $F\in \mathcal{F}$ with  $F^c$  be an infinite set; for each $i\in F$  take $a_i\in X_i$.
 The set $A$ defined as   \begin{equation}
A=:\{ x=(x_i)\in \prod_{i\in I}X_i : x_i = a_i,\  i\in F\}
 \end{equation}
is closed since
$$A=\prod_{i\in F}\{a_i\} \times \prod_{i\in F^c}X_i = \bigcap_{i\in F}p_i^{-1}\{a_i\}.$$
On the other hand $A$ is isomorphic with the space $\square_{i \notin F} \, X_i$ since if
$\mathcal{F}_A$ is the topology of subspaces of  $\prod_{i \in I}^{\mathcal F} X_i$ for $A$ then
 $$U \in \mathcal{F}_A \Leftrightarrow U = A \cap
 (\prod_{i\in F}X_i \times \prod_{i\in F^c}U_i)=\prod_{i\in F}\{a_i\} \times
 \prod_{i\in F^c}U_i.$$
Therefore $A$ is closed non-compact and then neither can the \F-topology be compact.
\end{proof}
\section{Resolvability}
The concept of resolvable (a space $X$ is resolvable if it contains two subsets $E, D$ dense and
disjoint such that $X=D\cup
 E$) was introduced and studied by E. Hewitt [He] in his famous article from
1943, among other reason with the purpose of building inside the complete lattice $Top(X)$ finer
topologies (or expansions) than one given.

 Since then, more general definitions concerning
resolvability have been studied. For   $0\le n\le \omega$, a space
$X$ is $n$--resolvable if $X$ has $n$ disjoint dense subsets
(being $n$--resolvable for each $n$ implies being
$\omega$--resolvable \cite{illanes})); even more, the last
definition can be extended to infinite cardinals. We will show in
this section an ample class of resolvable spaces.

In 1947 Katetov [Ka 2] retook the resolvability to answer a question of existence: is there a
topological space $X$ without isolated points for which every real valued function defined on $X$
has a point where the function is continuous?

If a space $X$ has two subsets $E, D$ dense and disjoint such that $X=D\cup
 E$ then the function $f$ defined as $f:X \longrightarrow \mathbb R$ where $f(E)=0$
 and $f(E) = 1$ cannot be continuous at any point. Then an affirmative answer to the
Katetov's problem must be in the class of the non-resolvable or irresolvable spaces (spaces that do
not posses dense disjoint subsets).

Let  $\{X_i\}_{i\in I}$  be a family of topological spaces and \F
\  a filter in $I$. Given a fixed point $x=(x_i)$ in  the
Cartesian product  $X=\prod_{i \in I} X_i$, we define the
following subset called the \emph{equalizer} by  \F \  for the
point $x$,
 $$\sum^{\mathcal{F}}(x)=\{(z_i)\in X \mid \{i\in I: x_i = z_i  \}\in \mathcal{F}\}=\bigcup_{F\in
\mathcal{F}}\left(\prod_{i \in F} \{x_i\} \times \prod_{i \in F^c} X_i\right)$$ (this definition
generalizes the definition given by the equation (I.2)).

\begin{propo} Let $\{X_i\}_{i\in I}$  be a family of topological spaces and \F \ a filter in $I$. For each $x\in X$
the set $\sum^{\mathcal{F}}(x)$
 is dense in $ X=\prod_{i \in I}^{\mathcal F} X_i$.

  Also, if $x,y \in X$ are such that  $\{i:x_i \neq y_i\}\in \mathcal{F}$ ---we call them \emph{different by} \F ---then
$$\sum^{\mathcal{F}}(x)\bigcap\sum^{\mathcal{F}}(y)=\emptyset.$$
\end{propo}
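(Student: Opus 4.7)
The plan is to handle the two parts separately, each by direct filter manipulation on the basic open sets of the $\mathcal{F}$-topology.

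For the density claim, I would fix a nonempty basic open set
\[
\mathbf{U} = \prod_{i \in F^c} U_i \, \overset{\mathcal{F}}{\times}\, \prod_{i \in F} X_i, \qquad F \in \mathcal{F},
\]
with each $U_i$ a nonempty open subset of $X_i$, and exhibit a point of $\sum^{\mathcal{F}}(x) \cap \mathbf{U}$. The natural candidate is the point $z = (z_i)_{i \in I}$ defined by $z_i := x_i$ for $i \in F$ and $z_i :=$ any chosen element of $U_i$ for $i \in F^c$ (using $U_i \ne \varnothing$). By construction $z \in \mathbf{U}$, and
\[
\{i \in I : x_i = z_i\} \supseteq F \in \mathcal{F},
\]
so by closure of $\mathcal{F}$ under supersets this set belongs to $\mathcal{F}$, giving $z \in \sum^{\mathcal{F}}(x)$. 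Since every nonempty basic open set meets $\sum^{\mathcal{F}}(x)$, it is dense.

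For the disjointness claim, I would argue by contradiction. Suppose $z \in \sum^{\mathcal{F}}(x) \cap \sum^{\mathcal{F}}(y)$, and set
\[
A := \{i \in I : x_i = z_i\}, \quad B := \{i \in I : y_i = z_i\}, \quad C := \{i \in I : x_i \ne y_i\}.
\]
By hypothesis $A, B, C \in \mathcal{F}$, hence by finite-intersection closure $A \cap B \cap C \in \mathcal{F}$. But any $i \in A \cap B$ satisfies $x_i = z_i = y_i$, so $A \cap B \cap C = \varnothing$, contradicting the fact that a filter does not contain the empty set.

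I do not expect a real obstacle; the only point requiring any care is keeping straight that the indices in $F$ (the ``distinguished'' indices of the box) are precisely those that we need to use for matching with $x$, while the ``non-distinguished'' coordinates in $F^c$ have enough freedom to land inside the prescribed open sets. Everything else is a one-step application of the filter axioms.
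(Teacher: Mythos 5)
Your proposal is correct and follows essentially the same route as the paper: for density you build the witness $z$ by copying $x$ on the distinguished indices and filling the remaining coordinates from the box (the paper takes these from a point $y$ already in the box, which is the same device), and for disjointness both arguments reduce to the fact that a filter cannot contain two sets whose intersection is empty. No gaps.
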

\begin{proof} Let
$$U=
\prod_{i \in F^c}U_i \overset{{\mathcal F}}{\times} \prod_{i \in F} X_i$$ be a basic open set and
 $y=(y_i)\in U$. For the index set  subset
$$\delta( U)=\delta \langle U_i \rangle := \{i \in I \mid U_i = X_i\}\in  \mathcal{F}$$
we define the point $z=(z_i)_i\in I$ as
\[z_i=
\begin{cases}
x_i & \text{if $i\in \delta(U)$}, \\
y_i & \text{if $i\notin \delta(U)$}.
\end{cases}
\]
The point   $z\in U\cap \sum^{\mathcal{F}}(x)$, which proves density.

 To see that the sets are
disjoint, let  $z\in \sum^{\mathcal{F}}(x)\cap\sum^{\mathcal{F}}(y)$ ---$z$ is equal to $x$ and $y$
by means of \F ---therefore   $F_x =\{i\in I: x_i = z_i  \} \in \mathcal{F}$   and $F_y =\{i\in I:
y_i = z_i  \} \in \mathcal{F}$, hence $F_x \cap F_y \in \mathcal{F}$ but
 $$F_x \cap F_y \subseteq \{i:x_i = y_i\}= \{i:x_i \ne y_i\}^c$$
and then   $\{i:x_i \ne y_i\}^c\in \mathcal{F}$ and this
contradicts that $\mathcal{F}$  is a filter.
\end{proof}
 Since each set that contains a dense subset is also dense, this last proposition shows that
 $ X=\prod_{i \in I}^{\mathcal F} X_i$
is a solvable space.
\section{ Filters for a product of sets}
\subsection{The construction}

As we noted in Observation 2.2, the construction is valid even if the family $\mathcal{J}_i$
considered in each $X_i$ is not a topology.

Let's consider then the case of a family  $\{ X_i\}_{i\in I }$ of non-empty sets and for each index
$i$ a filter $\mathcal{F}_i$  in $X_i$. In other words, we consider a family of pairs $\{(X_i,
\mathcal{F}_i)\}_{i \in I}$  and we call each pair a \emph{filtered space} (imitating the
topological space concept in which the topological structure is changed by the filter structure).

Given the cartesian product $X=\prod_{i \in I} X_i$ of the sets
$X_i$, we ask: How to construct a filter in $X$ from the filter
family $\{\mathcal{F}_i\}_{i \in I}$ in the factors? The answer is
the  following construction.

Given a family  $\{(X_i, \mathcal{F}_i)\}_{i \in I}$  of filtered spaces, we define a \emph{box}
$\bold F$ in the set product  $\prod_{i \in I} X_i$ as
\begin{equation*}
\bold F  := \langle F_i \rangle :=  \prod_{i\in I} F_i, \ F_i \in \mathcal{F}_i,\
 i \in I.
\end{equation*}

 The set  of
distinguished indexes of a box is defined as
\begin{equation*}
\delta(\bold F)=\delta \langle F_i \rangle := \{i \in I \mid F_i = X_i\}.
\end{equation*}
The \emph{support } $\sigma\langle F_i \rangle$  of the box  $\langle F_i \rangle$ is defined as

\begin{equation*}
\sigma(\bold F)=\sigma \langle F_i \rangle := \{i \in I \mid F_i \ne X_i\}.
\end{equation*}
\begin{propo} Let  $ \mathcal{F}$  be a filter on $I$. The boxes  $\langle F_i \rangle$ such that
$\delta \langle F_i \rangle \in \mathcal{F}$
 form a base for a filter
in  $\prod_{i \in I} X_i$.
\end{propo}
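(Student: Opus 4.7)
The plan is to verify the two axioms of a filter base for the given collection $\mathcal{B} := \{\langle F_i\rangle : \delta\langle F_i\rangle \in \mathcal{F}\}$: namely, that $\varnothing\notin\mathcal{B}\ne\varnothing$ and that $\mathcal{B}$ is downward directed under inclusion. The structure of the argument will closely parallel the topological version already proved in Proposition 2.1, but with one small extra check (emptiness) coming from the fact that filter elements are automatically non-empty.

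First I would observe that $\mathcal{B}\ne\varnothing$ trivially: since $\mathcal{F}$ is a filter on $I$ we have $I\in\mathcal{F}$, and the box $\langle X_i\rangle$ with $F_i=X_i$ for every $i$ has $\delta\langle X_i\rangle=I\in\mathcal{F}$, hence belongs to $\mathcal{B}$. Next I would verify that $\varnothing\notin\mathcal{B}$: if $\langle F_i\rangle\in\mathcal{B}$, then each $F_i\in\mathcal{F}_i$ is a filter element, therefore non-empty, and the Cartesian product $\prod_{i\in I}F_i$ is non-empty (invoking the axiom of choice).

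For the directedness condition, given two boxes $\langle A_i\rangle,\langle B_i\rangle\in\mathcal{B}$, the candidate witness is their set-theoretic intersection
\[
\langle A_i\rangle\cap\langle B_i\rangle=\langle A_i\cap B_i\rangle.
\]
The key point is that this is again a box belonging to $\mathcal{B}$. Two things need checking: (i) each coordinate $A_i\cap B_i$ lies in $\mathcal{F}_i$, which is immediate because $\mathcal{F}_i$ is a filter and hence closed under finite intersections, and (ii) $\delta\langle A_i\cap B_i\rangle\in\mathcal{F}$. For the latter, the small observation is that $A_i\cap B_i=X_i$ forces both $A_i=X_i$ and $B_i=X_i$ (since $A_i,B_i\subseteq X_i$), and conversely, so
\[
\delta\langle A_i\cap B_i\rangle=\delta\langle A_i\rangle\cap\delta\langle B_i\rangle,
\]
which belongs to $\mathcal{F}$ by closure of the filter $\mathcal{F}$ under finite intersections.

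The only place where a subtlety might arise is the non-emptiness condition, since the analogous topological statement did not need it; but this is handled by the fact that filter elements are non-empty by definition, together with the axiom of choice. Everything else is a direct transcription of the proof of Proposition 2.1, replacing ``open set'' by ``filter element'' throughout.
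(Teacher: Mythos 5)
Your proof is correct and follows essentially the same route as the paper's: the intersection of two boxes is the box of coordinatewise intersections, and $\delta\langle A_i\cap B_i\rangle=\delta\langle A_i\rangle\cap\delta\langle B_i\rangle\in\mathcal{F}$ by closure of $\mathcal{F}$ under finite intersections. You are in fact slightly more careful than the paper, which omits the checks that $\mathcal{B}\ne\varnothing$ (via $I\in\mathcal{F}$ and $X_i\in\mathcal{F}_i$) and that $\varnothing\notin\mathcal{B}$ (each $F_i\ne\varnothing$ plus the axiom of choice) --- both of which are genuinely needed for a filter base, since unlike the topological case the generated filter must not contain the empty set.
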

\begin{proof} $(\Rightarrow)$ Let   $\textbf{F}=\langle F_i \rangle$, $\textbf{G}= \langle G_i \rangle $  be two boxes such that
$\delta \langle F_i \rangle \in \mathcal{F}$ and $ \delta \langle G_i \rangle \in \mathcal{F}$. The
box $\langle F_i \rangle \cap \langle G_i \rangle  =  \langle F_i \cap G_i \rangle$ is such that
$\delta \langle F_i \cap G_i \rangle \in \mathcal{F}$.

$(\Leftarrow)$ Since  \F$\ne \varnothing$, there is at least one element in the filter. Let
$\langle F_i \rangle , \langle G_i \rangle $ be boxes such that  $\delta \langle F_i \rangle,
\delta \langle G_i \rangle \in $\F. Then
\begin{itemize}
\item [(i)] $\langle F_i \rangle \cap \langle G_i \rangle  =  \langle F_i \cap G_i \rangle$,
\item [(ii)] $\delta \langle F_i \cap G_i \rangle = \delta \langle F_i \rangle \cap \ \delta
 \langle G_i \rangle \in \mathcal{F}$.
\end{itemize}
 The filter generated by this base is denoted
$\prod_{i \in I}^{\mathcal F} \mathcal{F}_i$,
\medskip
and is called the  $\mathcal F$--filter for the product of filtrated spaces  $X_i$. An element of
the base can then be written as
\begin{equation*}
\prod_{i \in F^c}F_i \, \overset{{\mathcal F}}{\times} \, \prod_{i
\in F} X_i \qquad (F\in {\mathcal F}).
\end{equation*}
 Therefore, the elements of the $\mathcal F$--filter are all of the subsets in $\prod_{i \in I} X_i$ which
are supersets of some box.
\end{proof}
\subsection{Some properties}
If $ \mathcal{F}$ is the filter of the cofinite \ sets in $I$, the
corresponding $\mathcal F$--filter is called \emph{the product
filter} of the family   $\{(X_i, \mathcal{F}_i)\}_{i \in I}$ and
is denoted  $\prod_{i \in I}^{cofinitos} \mathcal{F}_i$. (Should
we call it the Tychonoff filter for the product?).

 The\emph{ box filter} is by definition the corresponding filter for the case when $\mathcal{F}$ is trivial filter of
parts of $I$.

 Since the projection functions   $p_i:\prod_{i \in I} X_i\longrightarrow  X_i$  are onto,
 given any filter ${\mathcal G}$ in the product $\prod_{i \in I} X_i$, the family $p_i(\mathcal{G})$ is a filter in $X_i$.

A natural question is: for which class of filters $\mathcal{F}$ we have that
$$p_i(\prod_{i \in I}^{\mathcal F} \mathcal{F}_i)=   \mathcal{F}_i$$
---continuity of the construction---?
\begin{propo} If  $\mathcal{F}$  is a saturated filter in $I$ then  $p_i(\prod_{i \in I}^{\mathcal F} \mathcal{F}_i)=
\mathcal{F}_i$.
\end{propo}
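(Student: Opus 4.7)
The plan is to prove the two inclusions $p_i\bigl(\prod_{i \in I}^{\mathcal F} \mathcal{F}_i\bigr) \subseteq \mathcal{F}_i$ and $\mathcal{F}_i \subseteq p_i\bigl(\prod_{i \in I}^{\mathcal F} \mathcal{F}_i\bigr)$ separately. The saturatedness hypothesis will be needed only for the second (the ``onto'' direction), exactly as in the Hausdorff proof of Proposition 2.6, where the same condition allowed us to embed each factor into the product.

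For the inclusion $p_i\bigl(\prod_{i \in I}^{\mathcal F} \mathcal{F}_i\bigr) \subseteq \mathcal{F}_i$, I would take $A$ in the projected filter, so $A \supseteq p_i(G)$ for some element $G$ of the product filter. By definition of the construction, $G$ contains a basic box $\langle F_j \rangle$ with $\delta\langle F_j \rangle \in \mathcal{F}$ and each $F_j \in \mathcal{F}_j$. Since every $F_j$ is non-empty (each $\mathcal{F}_j$ is a filter), $p_i(\langle F_j \rangle) = F_i$. Therefore $A \supseteq F_i \in \mathcal{F}_i$, and $\mathcal{F}_i$ is closed under supersets, so $A \in \mathcal{F}_i$. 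This direction does not use saturatedness.

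For the reverse inclusion, I would start with an arbitrary $F_i \in \mathcal{F}_i$ and construct an explicit box projecting onto it. The natural candidate is
\[
\mathbf{G} := F_i \times \prod_{j \neq i} X_j,
\]
that is, $G_i = F_i$ and $G_j = X_j$ for $j \neq i$. Then $\delta(\mathbf{G}) \supseteq I \setminus \{i\}$, and here the hypothesis enters: since $\mathcal{F}$ is saturated, $I \setminus \{i\} \in \mathcal{F}$, so $\delta(\mathbf{G}) \in \mathcal{F}$ (either it equals $I \setminus \{i\}$ when $F_i \ne X_i$, or it equals $I$ when $F_i = X_i$). Thus $\mathbf{G}$ is a basic element of $\prod_{i \in I}^{\mathcal F} \mathcal{F}_i$, and $p_i(\mathbf{G}) = F_i$ because every $X_j$ is non-empty, which shows $F_i \in p_i\bigl(\prod_{i \in I}^{\mathcal F} \mathcal{F}_i\bigr)$.

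No step looks genuinely hard; the only subtlety is making sure that the boxes used actually belong to the product filter, which is precisely what saturatedness guarantees and is why the hypothesis cannot be dropped. If one tried the same argument without it, the box $F_i \times \prod_{j \ne i} X_j$ might fail to satisfy $\delta \in \mathcal{F}$ for some $i$, and the corresponding $\mathcal{F}_i$ would not be recovered by projection.
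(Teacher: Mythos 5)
Your proof is correct and takes essentially the same route as the paper: the easy inclusion comes from projecting a basic box, and the reverse inclusion uses the box $F_i \times \prod_{j\neq i} X_j$, which lies in the product filter precisely because saturation gives $I\setminus\{i\}\in\mathcal{F}$. If anything, your write-up is slightly more explicit than the paper's, which argues the reverse inclusion somewhat informally.
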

\begin{proof}From the definition of the filter $p_i(\prod_{i \in I}^{\mathcal F} \mathcal{F}_i)$  we have
$$p_i(\prod_{i \in F^c}F_i \, \overset{{\mathcal
F}}{\times} \, \prod_{i \in F} X_i )= F_i,$$ which implies  $p_i(\prod_{i \in I}^{\mathcal F}
\mathcal{F}_i)\subseteq  \mathcal{F}_i$.

To verify the other inclusion, given $F_i \in \mathcal{F}_i$ it must be that there is a box
$\langle F_i \rangle$ for which
 $p_i(\langle F_i \rangle)=F_i$; and for this to be the case we should be able to choose and element $F_i$
 of the filter $\mathcal{F}_i$  and
therefore it is necessary the index $i$ does not belong to some element $F\in \mathcal{F}$, which
implies  $X-\{i\}\in \mathcal{F}$; that is, $\mathcal{F}$  is saturated.
\end{proof}

In particular, since the filter of the cofinite \ sets is
saturated, we have that for the product filter its projection on
$X_i$  is $\mathcal{F}_i$. But we have much more, the product
filter is ``the best" \  filter (the smallest) that satisfies this
property.
\begin{propo} Let ${\mathcal G}$ be a filter in $\prod_{i \in F} X_i$ such that $p_i(\mathcal{G})= \mathcal{F}_i $
 for each  $i\in I$. Then
$$
p_i(\prod_{i \in I}^{\mathcal F} \mathcal{F}_i)\subseteq \mathcal{G}. $$
\end{propo}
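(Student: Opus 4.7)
Reading the conclusion with its natural emendation as $\prod_{i\in I}^{\mathcal F}\mathcal F_i\subseteq\mathcal G$ (the projection on the right-hand side does not type-check, and the surrounding paragraph identifies the product filter, i.e.\ $\mathcal F$ cofinite, as the \emph{smallest} filter whose projections recover the $\mathcal F_i$), I would proceed in three steps.

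First, a reduction to basic elements. Every element of $\prod_{i\in I}^{\mathcal F}\mathcal F_i$ is a superset of some basic box $\mathbf F=\langle F_i\rangle$ with $\delta\langle F_i\rangle\in\mathcal F$, and $\mathcal G$ is closed under supersets, so it suffices to show each such $\mathbf F$ belongs to $\mathcal G$.

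Second, a finite decomposition of the box into cylinders. Set $S=\sigma\langle F_i\rangle$; because $\mathcal F$ is the cofinite filter, $S$ is finite, and since $F_i=X_i$ for every $i\notin S$ we can write
\[
\mathbf F \;=\; \bigcap_{i\in S} p_i^{-1}(F_i).
\]
By closure of $\mathcal G$ under \emph{finite} intersections it is enough to show each cylinder $p_i^{-1}(F_i)$ lies in $\mathcal G$.

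Third, the cylinders come from the hypothesis $p_i(\mathcal G)=\mathcal F_i$. Since $F_i\in\mathcal F_i$, by definition of the filter generated by the image base there exists $G\in\mathcal G$ with $p_i(G)\subseteq F_i$; then
\[
G \;\subseteq\; p_i^{-1}(p_i(G)) \;\subseteq\; p_i^{-1}(F_i),
\]
and superset closure of $\mathcal G$ gives $p_i^{-1}(F_i)\in\mathcal G$, completing the argument.

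The only substantive obstacle is the finiteness of $S$ in the second step: it is exactly what turns the decomposition of $\mathbf F$ into a finite intersection of cylinders, and without it a filter would not be closed under the intersection that appears. This is why the cofinite hypothesis is indispensable and corresponds to the extremality claim quoted just before the proposition.
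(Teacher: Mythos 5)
Your proof is correct and follows essentially the same route as the paper's: reduce to a basic box, write it as the finite intersection $\bigcap_{i\in F^c}p_i^{-1}(F_i)$ using cofiniteness of $\mathcal F$, and obtain each cylinder in $\mathcal G$ from an $H\in\mathcal G$ with $H\subseteq p_i^{-1}(F_i)$. Your emendation of the misprinted conclusion to $\prod_{i\in I}^{\mathcal F}\mathcal F_i\subseteq\mathcal G$ is also the reading the paper's own proof implicitly adopts.
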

\begin{proof} Let  $A$ be an element of the base for the filter  $\prod_{i \in I}^{\mathcal F} \mathcal{F}_i$.
There is  $F\in \mathcal{F}$  for which $A=\prod_{i \in F^c}F_i \, \overset{{\mathcal F}}{\times}
\, \prod_{i \in F} X_i$. Since $F^c$ is a finite subset, we have
$$A=\bigcap_{i\in F^c}p_i^{-1}(F_i)\in \mathcal{G}$$
given that each of the elements  $p_i^{-1}(F_i)\in \mathcal{G}$ since by hypothesis
$p_i(\mathcal{G})=\mathcal{F}_i$  which implies that there is $H\in \mathcal{G}$ con $p_i(H)=F_i$
for each $i\in F^c$  and, therefore,  $H\subseteq p_i^{-1}(F_i)$.
\end{proof}
\begin{nota} This last proposition is exactly the dual of the characterization of
the Tychonoff product topology.
\end{nota}
 Given a \F -topology in the product, by $\mathcal{V}_{\mathcal{F}}(x)$ we denote the filter of the
 neighborhoods of the point $x=(x_i)$; for each space $(X_i, \mathcal{J}_i)$  we denote by
  $\mathcal{V}(x_i)$ the filter of the
  neighborhoods of the point $x_i$. The
relation between  $\mathcal{V}_{\mathcal{F}}(x)$ and the product filter of the $\mathcal{V}(x_i)$
for the Cartesian product is given by the following proposition.
\begin{propo} Given $\{(X_i, \mathcal{J}_i)\}_{i \in I}$  a family of topological spaces, $\mathcal{F}$ a filter in
$I$ and $x=(x_i)_{i\in I} \in \prod_{i \in I} \, X_i$ a point in the product space with the
\F-topology, then
$$\mathcal{V}_{\mathcal{F}}(x)= \prod_{i \in I}^{\mathcal F}
\mathcal{V}(x_i).$$
\end{propo}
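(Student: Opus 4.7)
The plan is to prove equality of the two filters by unpacking both sides as filters generated by explicit bases, then verifying each base is contained in the other filter. On the left, $\mathcal{V}_{\mathcal{F}}(x)$ consists of all supersets of basic open sets of the $\mathcal{F}$-topology that contain $x$; such basic open sets have the form $\langle U_i \rangle$ with each $U_i$ open in $X_i$, $x_i \in U_i$ for every $i$, and $\delta\langle U_i \rangle \in \mathcal{F}$. On the right, by Proposition 4.1 the base of $\prod_{i \in I}^{\mathcal F} \mathcal{V}(x_i)$ is formed by the boxes $\langle V_i \rangle$ with $V_i \in \mathcal{V}(x_i)$ and $\delta\langle V_i \rangle \in \mathcal{F}$.

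For the inclusion $\prod_{i \in I}^{\mathcal F} \mathcal{V}(x_i) \subseteq \mathcal{V}_{\mathcal{F}}(x)$ I would start with a basic box $\langle V_i \rangle$ on the right and, for every index $i$ with $V_i \ne X_i$, choose an open $U_i$ in $X_i$ with $x_i \in U_i \subseteq V_i$ (which exists since $V_i$ is a neighborhood of $x_i$); for the remaining indices I would set $U_i := X_i$. The resulting box $\langle U_i \rangle$ is a basic open set of the $\mathcal{F}$-topology containing $x$ and is contained in $\langle V_i \rangle$, so the latter is a neighborhood of $x$. For the reverse inclusion, any basic open neighborhood $\langle U_i \rangle$ of $x$ is automatically of the form used on the right, since every open set containing $x_i$ is a neighborhood of $x_i$ and $\delta\langle U_i \rangle$ already lies in $\mathcal{F}$ by the definition of the $\mathcal{F}$-topology; hence it is a basic element of $\prod_{i \in I}^{\mathcal F} \mathcal{V}(x_i)$, and every $\mathcal{F}$-neighborhood of $x$ is a superset of such a box.

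The only point that requires a moment of care---the main \emph{obstacle}, such as it is---is ensuring that the set of distinguished indices does not slip out of $\mathcal{F}$ when passing from a neighborhood basis to an open basis: shrinking each non-trivial $V_i$ to an open $U_i \subseteq V_i$ enlarges the distinguished index set from $\delta\langle V_i \rangle$ to some $\delta\langle U_i \rangle \supseteq \delta\langle V_i \rangle$, and the latter remains in $\mathcal{F}$ by upward closure. With this observation the two inclusions reduce to routine bookkeeping on the bases.
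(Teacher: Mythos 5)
Your proof is correct and follows essentially the same route as the paper: identify the basic open boxes through $x$ with the basic boxes of the filter $\prod_{i \in I}^{\mathcal F} \mathcal{V}(x_i)$ and compare supersets on both sides. You are in fact slightly more complete than the printed proof, which only writes out the inclusion $\mathcal{V}_{\mathcal{F}}(x)\subseteq \prod_{i \in I}^{\mathcal F} \mathcal{V}(x_i)$ and leaves the reverse direction (your step of shrinking each $V_i$ to an open $U_i$ with $x_i\in U_i\subseteq V_i$) implicit.
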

\begin{proof} Given   $V_x \in \mathcal{V}_{\mathcal{F}}(x)$  there is $F\in \mathcal{F}$ and an open set in
the base of the \F-topology for which  $$x \in \prod_{i \in F^c}U_i \, \overset{{\mathcal
F}}{\times} \, \prod_{i \in F} X_i\subseteq V_x.$$
 Since each $x_i \in U_i \in \mathcal{V}(x_i)$ we
have that the box   $\prod_{i \in F^c}U_i \, \overset{{\mathcal F}}{\times} \, \prod_{i \in F}
 X_i$ belongs to the base of the filter $\mathcal{V}_{\mathcal{F}}(x)$  and therefore
 $V_x\in \mathcal{V}_{\mathcal{F}}(x)$.
\end{proof}
\section{Product of uniform spaces}
As
a last application, let's see the construction of a uniform structure for the Cartesian product of
sets.
\subsection{Notation and preliminary concepts}
The notion of \emph{uniform space} was introduced by A. Weil [We] in 1937; basically it concerns a
set $X$ together with a family $\mathcal{U}$ of subsets of  $X\times X$ that satisfy certain
natural conditions if we keep in mind metric spaces. Since we are dealing with subsets of $X\times
X$, that is of relations in $X$, we should remember some basic definitions.

\medskip
By definition a \emph{relation} ${G}$ in $X$ is a subset  $G
\subseteq X\times X$. It is customary to write $x G y$ instead of
$(x, y) \in G$. We define the composition of two relations ${ G}$
and $H$ as $$G\circ H := \{(x,z) :  \text{ there is  } y \in X
\text{ such that }(x,y)\in G \text{ and } (y,z)\in H \}.$$ We
denote $G^{(2)}= G\circ G$.

\medskip
  The inverse $ G^{-1}$ of the relation ${\cal G}$ is
define as  $$ G^{-1}:=\{(x,z) : (z,x)\in G \}.$$ The relation $$\Delta (X)=\{(x,x):x \in X\}$$ is
called the \emph{diagonal} in $X$.

\medskip

  A \emph{uniformity} for a set $X$ is a family  $\mathcal{U}=\{U_a\}_{a\in A}$ of relations in $X$ that satisfy
the following conditions:
\begin{enumerate}
\item $\Delta(X)\subseteq U$ for every $U\in \mathcal{U}$ ---the elements of $\mathcal{U}$ are
 then called \emph{entourages} of the diagonal---.
\item If  $U\in \mathcal{U}$  then $U^{-1}\in \mathcal{U}$.
\item If $U\in \mathcal{U}$ there exists $V\in  \mathcal{U}$ such that $V\circ V \in \mathcal{U}$.
\item If $U, V\in \mathcal{U}$   then $U\cap V\in \mathcal{U}$.
\item If   $U\in \mathcal{U}$ and $U\subseteq V$ then  $V\in \mathcal{U}$.
\end{enumerate}
The pair  $(X,\mathcal{U})$  is called a \emph{uniform space}.  If
$x,y \in X$ are such that $(x,y)\in U$ for $U\in \mathcal{U}$ we
say that $x,y$ are $U$-near. Observe that a uniformity
$\mathcal{U}$ is in particular a filter in $X\times X$.

The collection $Unif(X)$ of the uniformities over a fixed set $X$ with the usual order among sets,
is a complete lattice with first and last elements $\{X \times X\}$ and $\{M\subseteq X\times X:
\Delta(X)\subseteq M\}$ respectively.

\medskip
A subfamily   $\mathcal{B}\subset \mathcal{U}$ of a uniformity $\mathcal{U}$ is a \emph{base} for
$\mathcal{U}$ if and only if each entourage  in $\mathcal{U}$ contains an element of
$\mathcal{B}$.

The following proposition gives necessary and sufficient conditions so that a family of relations
in $X$ can be a base of a unique uniformity for $X$.

\begin{propo} Let $X$ be a set and $\mathcal{B}$ a non-empty collection of subsets of
 $X\times X$. $\mathcal{B}$ is a base of a uniformity for $X$ if and only if

\begin{enumerate}
 \item Each element of $\mathcal{B}$ has a diagonal $\Delta(X)$.
 \item If  $U\in \mathcal{B}$ then $U^{-1}$ contains some element of  $\mathcal{B}$.
\item  If $U\in \mathcal{B}$ then there is $V \in \mathcal{B}$  such that  $V\circ V \subseteq U$.
\item  If   $U, V \in \mathcal{B}$ then
there is $W \in \mathcal{B}$
 such that $W\subseteq U\cap V$ ---$\mathcal{B}$  is a filter base---.
\end{enumerate}
\end{propo}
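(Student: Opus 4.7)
The plan is to prove both directions, treating $\mathcal{B}$ as a candidate base and letting $\mathcal{U} := \{V \subseteq X \times X : B \subseteq V \text{ for some } B \in \mathcal{B}\}$ be the collection of supersets of elements of $\mathcal{B}$ (the obvious analog of the filter and topology constructions earlier in the paper).

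For the forward direction ($\Rightarrow$), assume $\mathcal{B}$ is a base for some uniformity $\mathcal{U}$. Each $B \in \mathcal{B}$ already lies in $\mathcal{U}$, so condition 1 is immediate from property (1) of uniformities. For condition 2, $B^{-1} \in \mathcal{U}$ by property (2), and by the base definition some element of $\mathcal{B}$ sits inside $B^{-1}$. For condition 3, property (3) of uniformities produces some $V' \in \mathcal{U}$ with $V' \circ V' \subseteq B$; a base element $V \subseteq V'$ then satisfies $V \circ V \subseteq V' \circ V' \subseteq B$. For condition 4, $U \cap V \in \mathcal{U}$ by property (4), so it contains some $W \in \mathcal{B}$.

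For the converse ($\Leftarrow$), assume conditions 1--4 and verify that $\mathcal{U}$ (as defined above) satisfies the five uniformity axioms. Properties (4) and (5) of uniformities are the same pattern that already appeared in Proposition 4.1 and are handled by conditions 4 and the very definition of $\mathcal{U}$ as an upward-closed family. Property (1) is immediate from condition 1 together with upward closure. For property (2), given $U \in \mathcal{U}$ pick $B \in \mathcal{B}$ with $B \subseteq U$; by condition 2 there is $C \in \mathcal{B}$ with $C \subseteq B^{-1}$, so $C \subseteq B^{-1} \subseteq U^{-1}$ and hence $U^{-1} \in \mathcal{U}$. Property (3) follows symmetrically from condition 3: choose $B \in \mathcal{B}$ with $B \subseteq U$, then $V \in \mathcal{B}$ with $V \circ V \subseteq B \subseteq U$, so $V \in \mathcal{U}$ witnesses (3).

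The only step requiring slight care is verifying that $\mathcal{U}$ so defined is \emph{the unique} uniformity having $\mathcal{B}$ as a base, which amounts to observing that any uniformity containing $\mathcal{B}$ must at least contain all supersets of elements of $\mathcal{B}$ (by axiom (5)), and conversely that our $\mathcal{U}$ contains $\mathcal{B}$ by construction. I expect the most subtle point to be keeping straight, in the inverse and composition arguments, that it is the base element $B \subseteq U$ (not $U$ itself) against which one invokes conditions 2 and 3; once this is pinned down the verification is entirely parallel to the filter-base proof already given.
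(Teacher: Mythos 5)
The paper does not actually prove this proposition: it is stated as a known characterization (it is the standard base criterion from Bourbaki/Kelley), and the text passes immediately to the description of the generated uniformity as the collection of supersets of $\mathcal{B}$. So there is nothing to compare your argument against; judged on its own, your proof is correct and complete. Both directions are handled properly: in the forward direction you correctly apply the base property to $U^{-1}$, to the $V'$ furnished by axiom (3), and to $U\cap V$, using monotonicity of composition for $V\subseteq V'\Rightarrow V\circ V\subseteq V'\circ V'$; in the converse you correctly anchor every verification on a base element $B\subseteq U$ rather than on $U$ itself, which is indeed the only place where care is needed. Your uniqueness remark (any uniformity admitting $\mathcal{B}$ as a base is squeezed between $\mathcal{B}$ and its upward closure) is also right. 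One small point worth flagging: the paper's axiom (3) for a uniformity is misprinted as ``there exists $V$ such that $V\circ V\in\mathcal{U}$''; you silently read it in its intended form $V\circ V\subseteq U$, which is what the proposition's condition 3 requires, so your reading is the correct one.
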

 The uniformity $\mathcal{U}$ generated by the previous base B$\mathcal{B}$, is formed by
all the supersets of the base, that is, $U\in \mathcal{U} $  if and only if there is $B\in
\mathcal{B}$  such that  $B\subseteq U$.

\medskip
A function  $f:(X,\mathcal{U})\longrightarrow (Y,\mathcal{V})$  between uniformed spaces is said to
be \emph{uniformly }continuous if for each  entourage $V \in \mathcal{V}$ there is $U \in
\mathcal{U}$ such that $(x,y)\in U$
 implies  $(f(x),f(y))\in V$ ---$f(x), f(y)$ are so close as we wish as long as  $x,y$ are also close---.

\subsection{The construction}

Let's consider the case of a family $\{ X_i\}_{i\in I }$ of non-empty sets and for each index $i$ a
uniformity $\mathcal{U}_i$ in  $X_i$. In other words, we consider a family of uniform spaces
$\{(X_i, \mathcal{U}_i)\}_{i \in I}$.

Given a Cartesian product $X=\prod_{i \in I} X_i$ of the sets $X_i$, we ask: how do we build a
uniformity in $X$ starting with the family of uniformities $\{\mathcal{U}_i\}_{i \in I}$  in the
factor sets? Since we must consider relations in $X$ we identify
 $$X\times X=\prod_{i \in I} X_i \times \prod_{i \in I} X_i = \prod_{i \in I} (X_i \times X_i) .$$
For each $i\in I$  we consider a base $\mathcal{B}_i$  for $\mathcal{U}_i$. Let's define a box
$\bold B $ in the product $X\times X$, as
\[
\bold B  := \langle B_i \rangle :=  \prod_{i\in I} B_i, \ B_i \in \mathcal{B}_i,\
 i \in I. \ \ (\text{Note that }B_i \subseteq X_i \times X_i)
\]
The set  $\delta \langle B_i \rangle$ of \emph{distinguished indexes} of a box is defined as
\[
\delta(\bold B)=\delta \langle B_i \rangle := \{i \in I \mid B_i = X_i \times X_i\}.
\]
\begin{propo}Let $ \mathcal{F}$ be a filter $I$. The boxes $\langle B_i \rangle$ such that
 $\delta \langle B_i \rangle \in \mathcal{F}$  form a base $ \mathcal{B}$  for a
uniformity in $\prod_{i \in I} X_i$.
\end{propo}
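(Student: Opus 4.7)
The plan is to verify the four base-of-uniformity conditions listed in Proposition 5.1 for the family $\mathcal{B}$ of boxes $\langle B_i\rangle$ with $\delta\langle B_i\rangle \in \mathcal{F}$. Condition~(1) (diagonal) is immediate: since every $B_i \supseteq \Delta(X_i)$, we have $\langle B_i\rangle = \prod_i B_i \supseteq \prod_i \Delta(X_i) = \Delta(X)$. Condition~(4) (filter-base directedness) repeats the arguments of Propositions 2.1 and 4.1: given $\langle B_i\rangle, \langle C_i\rangle \in \mathcal{B}$, I would pick $W_i \in \mathcal{B}_i$ coordinatewise with $W_i \subseteq B_i \cap C_i$, keeping $W_i = X_i \times X_i$ at indices where both $B_i$ and $C_i$ already equal $X_i \times X_i$; then $\delta\langle W_i\rangle \supseteq \delta\langle B_i\rangle \cap \delta\langle C_i\rangle \in \mathcal{F}$, so $\langle W_i\rangle \in \mathcal{B}$.

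For the symmetry condition~(2) I would use the identity $\langle B_i\rangle^{-1} = \prod_i B_i^{-1}$; for each $i$ pick $C_i \in \mathcal{B}_i$ with $C_i \subseteq B_i^{-1}$, and take $C_i = X_i \times X_i$ at $i \in \delta\langle B_i\rangle$ (where $B_i^{-1}$ is already $X_i \times X_i$). For the half-relation condition~(3), the key identity is
\begin{equation*}
\langle C_i\rangle \circ \langle C_i\rangle \;\subseteq\; \prod_i (C_i \circ C_i),
\end{equation*}
obtained by projecting any intermediate witness $y \in X$ coordinatewise. Then pick $C_i \in \mathcal{B}_i$ with $C_i \circ C_i \subseteq B_i$, again leaving $C_i = X_i \times X_i$ at distinguished indices. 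In both cases $\delta\langle C_i\rangle \supseteq \delta\langle B_i\rangle \in \mathcal{F}$ by superset-closure of the filter, so $\langle C_i\rangle$ belongs to $\mathcal{B}$ and does the job.

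The main obstacle, such as it is, is purely bookkeeping: one must check that none of these coordinatewise manipulations shrinks the distinguished-index set below what $\mathcal{F}$ demands. The trick is simply never to modify a coordinate where $B_i$ already equals $X_i \times X_i$, which makes the $\delta$-set monotone along each operation and lets filter-closure under supersets and under finite intersections carry the whole argument through. Notably, only the easy direction of the composition identity is needed for~(3), so no appeal to the axiom of choice is required; the construction is entirely parallel to Propositions 2.1 and 4.1, with the uniform-base axioms replacing the topology/filter-base axioms in the factors.
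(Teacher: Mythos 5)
Your proof is correct and follows essentially the same route as the paper's: verifying the four conditions of the base characterization (Proposition 5.1) by choosing $U_i$, $C_i$, $W_i$ coordinatewise from the bases $\mathcal{B}_i$ and leaving $X_i\times X_i$ untouched at distinguished indices. If anything you are slightly more explicit than the paper, which leaves the inclusion $\langle C_i\rangle\circ\langle C_i\rangle\subseteq\prod_i(C_i\circ C_i)$ and the $\delta$-monotonicity bookkeeping for conditions (2) and (3) implicit.
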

\begin{proof} We will use for the proof the base characterization  in Proposition 7.1.  An
element  $\langle B_i \rangle$ of the base $\mathcal{B}$ can be
denoted as\begin{equation*} \langle B_i \rangle = \prod_{i \in
F^c}B_i \, \overset{{\mathcal F}}{\times} \, \prod_{i \in F} (X_i
\times X_i) \qquad (F\in {\mathcal F}).
\end{equation*}
\begin{enumerate}
  \item Let $\langle B_i \rangle$ be a base
element; for each $i\in I$ and each  ${B}_i \in \mathcal{B}_i$, we have $\Delta(X_i) \subseteq
B_i$. Then
$$\Delta(X)=\prod_{i \in I}\Delta(X_i) \subseteq \prod_{i \in I}B_i$$
\item  Let  $\langle B_i \rangle$ be a base element; for each
$i\in I$ and each   ${B}_i \in \mathcal{B}_i$  there is  ${U}_i
\in \mathcal{B}_i$  with  $U_i \subseteq B_i^{-1}$. Observe that
$\prod_{i \in I}U_i \in \mathcal{B}$ and $$\prod_{i \in I}U_i
\subseteq \prod_{i \in I}B_i^{-1}= \prod_{i \in F^c}B_i^{-1}\,
 \overset{{\mathcal F}}{\times} \, \prod_{i \in F}
(X_i \times X_i) \qquad (F\in {\mathcal F}).$$
 \item  Let $\langle B_i \rangle$ be a base element; for each $i\in I$ and each  ${B}_i \in \mathcal{B}_i$,
there is  ${U}_i \in \mathcal{B}_i$ with  $U_i\circ U_i \subseteq B_i$. Then
$$\prod_{i \in I}(U_i \circ U_i)\subseteq \prod_{i \in F^c}B_i\,
 \overset{{\mathcal F}}{\times} \, \prod_{i \in F}
(X_i \times X_i) \qquad (F\in {\mathcal F}).$$
\item Let $\langle C_i \rangle$ and $\langle D_i \rangle$  base elements; for each $i\in I$ there is
$W_i \in \mathcal{B}_i$   with  $W_i \subseteq C_i\cap D_i$ ---if $C_i=D_i=X_i\times X_i$ we take
$W_i= X_i\times X_i$---. Since \F \  is a filter, we have that $\prod_{i \in I}W_i$ is in the base
and
\[\prod_{i \in I}W_i\subset \prod_{i \in I}C_i \ \bigcap \ \prod_{i \in I}D_i =\prod_{i \in I}(C_i \cap
D_i) .   \qed
\]
\end{enumerate}\renewcommand{\qed}{}
\end{proof}

 The uniformity generated by the previous base is denoted  $ \prod_{i \in I}^{\mathcal F}
\mathcal{F}_i$, and is called the $\mathcal F$--uniformity for the product of uniform spaces $X_i$.
Therefore, the elements of the $\mathcal F$--uniformity are all subsets of $X\times X$  that are
supersets of some box.

\medskip
If $ \mathcal{F}$ is the filter of the cofinite \ sets in $I$,
the corresponding  $\mathcal F$--uniformity is called the
\emph{product uniformity} of the family $\{(X_i,
\mathcal{U}_i)\}_{i \in I}$   and is denoted  $\prod_{i \in
I}^{cofinitos} \mathcal{U}_i$. (Should we call it the Tychonoff
uniformity for the product?). The product uniformity is
characterized for being the smallest uniformity for which the
projection functions  $p_i:\prod_{i \in I} X_i\longrightarrow X_i$
are continuous.

\medskip
To each uniform space  $(X,\mathcal{U})$ it is associated in natural way a topology for the set $X$
denoted $\mathcal{J}_{\mathcal{U}}$ and defined as follows:

If $U\in \mathcal{U}$ y $x\in X$ we define the \emph{U--entourage} of $x$ as
$$U[x]:=\{y: (x,y)\in U\}.$$
We define
$$\mathcal{J}_{\mathcal{U}}:= \{ G\subseteq X: \text{ for each  }x\in G \text{ there is }U\in \mathcal{U}
\text{ such that } U[x]\subseteq G\}.$$ In other words,  $G\in \mathcal{J}_{\mathcal{U}}$ if and
only if ${ G}$ contains a \emph{U--entourage}
 around each one of its points ---dual of the metric spaces and the balls---.

  Finally, we emphasize the
fact that, over a product of uniform spaces, the \F-uniformity induces the \F-topology, that is,
$$\mathcal{J}_{\mathcal{F}-uniformitu}=\mathcal{F}-\text{topology}.$$

\newpage


\begin{thebibliography}{MMM}
\bibitem[Al]{alexandroff} P. S. Aleksandroff, {\it The principal mathematical discoveries of
A. N. Tykhonoff}, Russian Math. Surveys, Vol 31, no. 6 (1976).
\bibitem[Bo]{bourbaki} N. Bourbaki, {\it General toplogy, Part 1, Elements of Mathematics},
Addison-Wesley, (1966).
\bibitem[Bu]{button} R. W. Button, {\it Monads for regular and normal spaces}, Notre Dame J.
of Formal Logic, Vol XV, no. 3 (1976).
\bibitem[Ca]{cameron} D. E. Cameron, {\it The birth of the Stone-\u{C}ech compactification.}, In:
 rings of Continuous Functions, (ed. C. E. Aull), Marcel dekker, N. Y. 67--68. MR 86g:01035.
\bibitem[\u{C}e]{cech} E. \u{C}ech, {\it On bicompact  spaces}, Ann. Math., {\bf 38}  (1937), 823--844.
\bibitem[Co]{coquand} T. Coquand, {\it An intuitionistic proof of Tychonoff$\prime$s theorem}, The Journal of Symbolic Logic Logic, {\bf 57} no. 1 (1992),  28--32.
\bibitem[GK]{gmk} M. Z. Grulovi\'c, Milo\v{s} S. Kurili\'c, {\it On the preservation of separation axioms in
 products}, Comment. Math. Univ. Carolinae {\bf 33} 4 (1992) 713--721.
\bibitem[He]{hewitt} E. Hewitt, {\it A problem of set-theoretic topology}, Duke Math. J. {\bf 10} (1943) 309--333.
\bibitem[Il]{illanes} A. Illanes, {\it finite and $\omega$--resolvability}, Proc. Amer. Math. Soc. {\bf 124} (1996)
 1243--1246.
\bibitem[Ja]{james} I. M. James, (editor) {\it History of topology}, North-Holland, Elsevier, 1999.
\bibitem[J]{johnstone} P. T. Johnstone, {\it Tychonoff$\prime$s theorem whithout the axiom of choice}, Fund. Math. {\bf 113} (1982), 21---35.
\bibitem[K]{kelley}  J. L. Kelley, {\it The Tychonoff product theorem implies the axiom of choice}, Fund. Math.
{\bf 37} (1950), 75--76.
\bibitem[Ka]{Katetov} M. Katetov, {\it \"{U}ber H-abgeschlossene und bikompakte R\"aume,}
\u{C}asopis Pe\u{s}t. Mat. Fys.  {\bf 69} (1940), 36--49.
\bibitem[Ka 2]{Katetov} M. Katetov, {\it On topological spaces containing no disjoint dense sets,}
(in Russian), Mat. Sbornik {\bf 21} (1947), 3--12.
\bibitem[Kn]{knight} C. J. Knight, {\it The box topology}, Quart. J. Math. Oxford Ser. (2) {\bf 15} (1964), 41--54.
\bibitem[L]{loeb} P. Loeb \& D. Hurd, {\it Introduction to non-standard analysis,} Academic Press (1990).
\bibitem[MR]{mr} {\it MathSci, electronic database of Mathematical Reviews,} American Mathematical Society, 1940--1999.
\bibitem[NV]{negval} S. Negri, S. Valentine, {\it Tychonoff´s theorem in framework of formal topologies},
The Journal of Symbolic Logic Logic, {\bf 62} no. 4(1997),  1315--1332.
\bibitem[SN]{japs} Y. Suemura, Y. Nakano {\it The representation of the product by the monad}, J. Fac. Liberal Arts,
Yamaguchi Univ. Natur. Sci.  {\bf 20} (1987), 1--5.
\bibitem[Ti]{tietze} H. Tietze, {\it \"Uber Analysis situs}, Abhandl. Math. sem. Univ. Hamburg, 2 (1923) 27--70
\bibitem[Ty 1]{tychonoff} A. Tychonoff, {\it \"Uber die topologische Erweiterung von R\"aumen}, Math. Ann., 102 (1930)
544--561.
\bibitem[Ty 2]{tychonoff} A. Tychonoff, {\it \"Uber einen Funktionenraum}, Math. Ann. {\bf 111} (1935)
544--561.
\bibitem[Ty 3]{tychonoff} A. Tychonoff, {\it Ein Fixpunktsatz.}, Math. Ann., {\bf 111}  (1935a) 767--776
544--561.
\bibitem[Ve]{vermeulen} J. J. C. Vermeulen, {\it Proper maps of locales}, Journal of Pure and Applied Algebra
{\bf 92} (1994), 79--107.
\bibitem[We]{weil} A. Weil, {\it Sur les espaces á structure uniforme et sur la topologie générale},
Actualités Scient. et Ind.  {\bf 551} Paris (Hermann) (1937).
\bibitem[Vi]{ville} L. Villegas, {\it Maximal Resolvability on some topological spaces},
Bol. Soc. Mat. Maxicana  {\bf 5}, 1,  (1937) 123--136.
\end{thebibliography}
\end{document}